\newtheorem{thm}{Theorem}
\newtheorem{cor}[thm]{Corollary}
\newtheorem*{que*}{Question}
\newtheorem{defi}[thm]{Definition}
\newtheorem{lem}[thm]{Lemma}
\newtheorem{remark}[thm]{Remark}
\newtheorem{que}[thm]{Question}
\begin{document}
\title[Trace field degrees of Abelian differentials]{Trace field degrees of Abelian differentials}

\date{\today}
\author{Erwan Lanneau and Livio Liechti}

\address{ D\'epartement de Math\'ematiques,
Universit\'e de Fribourg,
Chemin du Mus\'ee~23,
1700 Fribourg,
Suisse
}
\email{livio.liechti@unifr.ch}

\address{
UMR CNRS 5582, 
Univ. Grenoble Alpes, CNRS, Institut Fourier, F-38000 Grenoble, France}
\email{erwan.lanneau@univ-grenoble-alpes.fr}

\subjclass[2020]{Primary: 57K20 }

\begin{abstract} 
We prove that every even number~$2\le 2d \le 2g$ is realised as the degree of a Thurston--Veech pseudo-Anosov stretch factor in every connected component of every stratum of the moduli space of Abelian differentials. 
\end{abstract}
\maketitle

\section{Introduction} 

Pseudo-Anosov mapping classes first appeared in Thurston's work in connection to classification of surface homeomorphisms. Nowadays, their study is a theory by itself  combining Teichm\"uller theory, dynamics, flat geometry and number theory. A mapping class $f$ is pseudo-Anosov if and only if it asymptotically stretches every isotopy class of essential simple closed curves by a fixed factor $\lambda(f)$, with respect to any Riemannian metric. Examples arising from Anosov torus covers are abundant, and there are many other constructions, for instance using train-tracks~\cite{PP}, Rauzy induction~\cite{Rauzy,Veech}, veering triangulations~\cite{Agol}, Penner's construction~\cite{Penner} or Thurston--Veech's construction\footnote{In the literature, this construction is often called \emph{Thurston's construction}. We choose the name to include Veech since in its full generality, the construction first appeared, independently, in the two cited articles by Thurston and Veech.}~\cite{Th} and~\cite[\S 9]{Veech:construction}.

\subsection*{Algebraic degrees of stretch factors}
An important aspect of the theory of pseudo-Anosov mapping classes emerged with Fried's work and concerns the study of the stretch factor $\lambda(f)$. This is a bi-Perron algebraic integer of degree bounded above by the dimension of the Teichm\"uller space for the underlying surface. The question of realising any bi-Perron algebraic integer as a stretch factor is a major challenge in the theory. 
Despite recent advances~\cite{Pankau,Liechti:Pankau}, Fried's question remains widely open. Observe that we cannot hope for a positive answer if we fix the topology of the underlying surface: there are cubic bi-Perron number that are not realised as the stretch factor of any mapping class on a genus three surface, see the work by Thurston~\cite[Page 6]{Thurston:Last}, and more recently~\cite{Yazdi}.

\subsection*{Thurston--Veech's construction}
Given two multicurves~$\alpha = \alpha_1\cup\dots\cup\alpha_n$ and~$\beta = \beta_1 \cup \dots \cup \beta_m$ with~$n$ and~$m$ components, respectively, that fill a surface~$\Sigma$ and intersect minimally, we let~$X$ be their geometric intersection matrix. In his 1988 seminal bulletin paper~\cite{Th}, Thurston proved that all nontrivial products of multitwists in~$\langle T_\alpha, T_\beta \rangle$ except powers of conjugates of~$T_\alpha$ or~$T_\beta$ are pseudo-Anosov if the Perron-Frobenius eigenvalue of~$XX^\top$ is strictly greater than four. 
In the same article, Thurston provides the upper bound on the algebraic degree of a pseudo-Anosov stretch factor $\lambda(f)$ by the dimension of the Teichm\"uller space in general, and by $2g$ in the special case of orientable invariant foliations. He also claimed, without proof, that {``\em the examples of~\cite[Theorem~7]{Th} show that this bound is sharp"}. The referenced examples are exactly the examples nowadays known as Thurston's construction, described above. More recently, Strenner~\cite{Strenner:algebraic} answered the question of which degrees appear for a pseudo-Anosov on a genus $g$ surface, including all nonorientable surfaces, by using Penner’s construction~\cite{Penner}.

\subsection*{Main results}
In this paper, we justify Thurston's remark for pseudo-Anosov mapping classes with orientable invariant foliations. In fact, we obtain a stronger result justifying Thurston's remark in every connected component of every strata of Abelian differentials, not just for a given genus.

\begin{thm}
\label{thm:stretch}
Every even integer~$2\le 2d \le 2g$ is realised as the degree of a stretch factor of a product of two affine multitwists on a surface in every connected component of every stratum of Abelian differentials on Riemann surfaces of genus~$g$.
\end{thm}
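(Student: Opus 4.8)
The plan is to stay inside the Thurston--Veech construction throughout and to read off both the stretch factor and the stratum from the combinatorics of the filling pair $(\alpha,\beta)$. For the flat structure adapted to $(\alpha,\beta)$ the multitwists $T_\alpha$ and $T_\beta$ are affine with parabolic derivatives, so the product of the two affine multitwists $\phi=T_\alpha T_\beta$ has derivative in $SL(2,\mathbb{R})$ of trace $2-\mu$, where $\mu=\rho_{PF}(XX^\top)$. When $\mu>4$ this is hyperbolic and $\phi$ is pseudo-Anosov with $\lambda+\lambda^{-1}=\mu-2$, so the trace field is $\mathbb{Q}(\lambda+\lambda^{-1})=\mathbb{Q}(\mu)$. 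Setting $d=[\mathbb{Q}(\mu):\mathbb{Q}]$, the minimal polynomial of $\lambda$ divides the reciprocal polynomial $\prod_i\bigl(t^2-(\mu_i-2)t+1\bigr)$ taken over the conjugates $\mu_i$ of $\mu$; this polynomial has degree $2d$ and is irreducible exactly when $(\mu-2)^2-4=\mu(\mu-4)$ is not a square in $\mathbb{Q}(\mu)$. I would force the latter by arranging $\mu$ to have a Galois conjugate $\mu_i\in(0,4)$: then $\mu_i(\mu_i-4)<0$, so $\mu(\mu-4)$ is negative under one real embedding and cannot be a square. Since the eigenvalues of the positive semidefinite matrix $XX^\top$ are nonnegative, such a conjugate is easy to build in. This gives $\deg\lambda=2d$ and is exactly what forces the degree to be even. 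The theorem is thereby reduced to the following: in each prescribed connected component of each stratum in genus $g$, realise a filling pair with $\mu=\rho_{PF}(XX^\top)>4$ of degree precisely $d$ (with a conjugate in $(0,4)$), for every $1\le d\le g$.

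For the degree I would prove a separate arithmetic lemma: for every $d$ there is an explicit $d\times d$ symmetric nonnegative integer matrix $M_d$ with irreducible characteristic polynomial, whose largest root $\mu>4$ is Perron--Frobenius and which has a root in $(0,4)$, and which arises as the quotient intersection matrix of a small filling configuration; irreducibility is checked on the explicit polynomial, for instance by reduction modulo a prime. To transplant this control into a large surface realising the target component, I would build the whole filling pair equivariantly under a finite symmetry group $G$ acting on the curves, with $M_d$ as the quotient intersection matrix. Since the Perron--Frobenius eigenvector is positive it is $G$-fixed, so $\mu$ lies in the trivial isotypic block and $\det(xI-M_d)$ is precisely that block of the characteristic polynomial of the cover's $XX^\top$; hence $\deg\mu=d$, while the remaining isotypic blocks contribute only strictly smaller eigenvalues once the core entries are chosen dominant. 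In this way the degree is pinned down by a bounded ``core'', independently of the overall size of the surface.

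For the stratum and its component I would use the standard Thurston--Veech dictionary between the complementary polygons of the filling pair and the zeros of the resulting Abelian differential: a complementary $4(k+1)$-gon produces a zero of order $k$ and squares produce regular points, while coherent co-orientability of the leaves (a $\mathbb{Z}/2$ condition on the configuration) places us in the translation, rather than half-translation, locus. Prescribing the partition $k_1,\dots,k_s$ of $2g-2$ is then a matter of inserting complementary polygons of the right sizes at $G$-symmetric positions. To fix the connected component in the Kontsevich--Zorich classification I would realise hyperelliptic components by a symmetric chain-type pair carrying an obvious hyperelliptic involution, and control the spin parity by computing the Arf invariant from the winding numbers of the horizontal foliation along an explicit symplectic basis and toggling it, when needed, by a local parity-changing modification performed away from the core.

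The main obstacle is reconciling these two demands at once. The degree of $\mu$ is a global spectral invariant of $XX^\top$, so the equivariance used to pin $\deg\mu=d$ constrains the whole configuration, whereas hitting an arbitrary component requires full freedom in the rigid invariants (the exact zero orders summing to $2g-2$, hyperellipticity, and the Arf invariant). The crux is therefore to carry out all the topological adjustments---choosing the $G$-cover, placing the larger complementary polygons, and flipping the spin parity---inside the $G$-equivariant family and strictly away from the dominant core, and to check that they change only the subdominant part of the spectrum, leaving $\mu$, its degree $d$, the inequality $\mu>4$, and the conjugate in $(0,4)$ intact. Verifying that every component of every stratum, for each admissible $d\le g$, can be met by such a coordinated choice is where the real work lies.
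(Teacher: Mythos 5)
Your algebraic mechanism is sound, and it is essentially the paper's nonsplitting criterion (Theorem~\ref{thm:criterion}) in different clothing. Since $XX^\top$ is a symmetric integer matrix, every Galois conjugate of $\mu$ is a real nonnegative eigenvalue of $XX^\top$, so a conjugate $\mu_i\in(0,4)$ makes the discriminant $\mu(\mu-4)$ of $t^2-(\mu-2)t+1$ negative under the corresponding real embedding of $\mathbb{Q}(\mu)$, hence a nonsquare there, forcing $[\mathbb{Q}(\lambda):\mathbb{Q}(\mu)]=2$ and $\deg\lambda=2d$. The paper's signature condition $\sigma(\Omega+2I)+\mathrm{null}(\Omega+2I)>n+m-2d$ serves exactly the same purpose --- it forces an eigenvalue of $\Omega$ into $[-2,2]$, equivalently an eigenvalue of $XX^\top$ into $[0,4]$, among the conjugate set --- and your discriminant argument is a clean substitute for the pigeonhole-plus-reciprocality step. (One boundary case: for $2d=2$ the trick is vacuous since $\mu$ has no other conjugate, and you would need the separate classical square-tiled argument.)

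The genuine gap is that the realisation half --- which is where almost all of the paper's work lies --- remains a program rather than a proof, and you say so yourself (``where the real work lies''). Concretely: (a) you never exhibit the core matrices $M_d$, and nonnegativity of the spectrum does not by itself place a conjugate of $\mu$ in $(0,4)$; the paper secures this through the explicit path-graph structure of its square-tiled models, showing a large principal submatrix $\Omega'+2I$ is positive definite (proof of Theorem~\ref{thm:all:strata:stretch}). (b) The equivariant step is unjustified: for a $G$-cover, the trivial isotypic block of the cover's intersection form corresponds to a \emph{weighted} quotient matrix in which orbit sizes enter, so the identity ``$\det(xI-M_d)$ is that block of the characteristic polynomial of the cover's $XX^\top$'' needs proof, and with it the claim $\deg\mu=d$; irreducibility for every $d$ is asserted (``reduction modulo a prime'') but never established, whereas the paper proves it uniformly in a parameter by Eisenstein in $\left(\mathbb{Z}[t]\right)[y]$ (Lemma~\ref{irred}) and specialises via Hilbert irreducibility. (c) $G$-symmetric insertion of complementary polygons forces zero orders to come in $G$-orbits, which conflicts with strata such as $\mathcal{H}(2g-2)$ having a single zero unless that zero is $G$-fixed, a constraint you do not analyse; and the component invariants (hyperellipticity, spin parity) require explicit models and Arf computations of the kind carried out in \S\ref{sec:cc}, which you only name. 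Without these constructions --- the long cylinder with inserted strips and L-shapes, the modified spin-$0$ models, and the staircase models for the hyperelliptic components --- the proposal correctly reduces the theorem but does not prove it.
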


The terminology ``connected component'' can be skipped on a first reading of this paper, and we refer to~\S \ref{subsec:cc} and~\cite{KZ} for more details. A stratum is the set of Abelian differentials having prescribed singularity multiplicities $(k_1,\dots,k_n)$, where $\sum_{i=1}^n k_i=2g-2$.

The extension field $K=\mathbb Q(\lambda+\lambda^{-1})$ is important in Teichm\"uller dynamics and is called the trace field. It is an invariant of Abelian differentials and has degree at most $g$ over $\mathbb Q$~\cite{Gutkin:Judge,Kenyon:Smillie}. We will deduce Theorem~\ref{thm:stretch} from the following result asserting that choosing a connected component of a stratum of Abelian differentials poses no restriction on the degree of trace fields.
\begin{thm}
\label{thm:trace}
Every integer~$1\le d\le g$ is realised as the degree of the trace field of a product of two affine multitwists on a surface in every connected component of every stratum of Abelian differentials on Riemann surfaces of genus~$g$.
\end{thm}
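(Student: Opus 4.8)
The plan is to reduce Theorem~\ref{thm:trace} to a purely combinatorial statement about intersection matrices, and then to realise that statement by explicit surgeries on flat surfaces. Recall that in the Thurston--Veech construction the two multitwists $T_\alpha$ and $T_\beta$ act on the invariant flat structure through the matrices $\left(\begin{smallmatrix} 1 & \sqrt{\mu} \\ 0 & 1\end{smallmatrix}\right)$ and $\left(\begin{smallmatrix} 1 & 0 \\ \sqrt{\mu} & 1\end{smallmatrix}\right)$, where $\mu$ is the Perron--Frobenius eigenvalue of $XX^\top$ (with higher twisting multiplicities simply reweighting $X$). Hence the product $T_\alpha T_\beta^{-1}$ acts with trace $2-\mu$, so its stretch factor $\lambda$ satisfies $\lambda+\lambda^{-1}=\mu-2$ and the trace field is $K=\mathbb{Q}(\lambda+\lambda^{-1})=\mathbb{Q}(\mu)$. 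Consequently $[K:\mathbb{Q}]=\deg_{\mathbb{Q}}\mu$, and Theorem~\ref{thm:trace} follows once we produce, in each connected component of each stratum, a filling pair $(\alpha,\beta)$ whose matrix $XX^\top$ has Perron--Frobenius eigenvalue of any prescribed algebraic degree $d$ with $1\le d\le g$ (and automatically $\mu>4$, so that the map is pseudo-Anosov).

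The second step is to separate the two features we must control: the connected component on the one hand, and the degree $\deg_{\mathbb{Q}}\mu$ on the other. The flat surface underlying $(\alpha,\beta)$ is described by a horizontal and a vertical cylinder decomposition, from which one reads off the singularity data $(k_1,\dots,k_n)$ and the Kontsevich--Zorich invariants (hyperellipticity and, when all $k_i$ are even, the spin parity) that label the component, see~\cite{KZ}. I would therefore build each example out of two modular pieces glued along a common cylinder: a \emph{base block}, chosen from a model surface in the target component, and a \emph{degree block} $D_d$, a chain of cylinders whose symmetric intersection matrix is tridiagonal with a dominant eigenvalue exceeding $4$ and an irreducible characteristic polynomial of degree exactly $d$. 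Arranging the gluing so that $XX^\top$ is block-triangular with the Perron--Frobenius eigenvalue coming from $D_d$, the degree of $\mu$ is governed by $D_d$ alone, while the singularity profile and the spin/hyperelliptic type are inherited from the base block.

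To make the degree block do its job I would fix a one-parameter family of tridiagonal matrices, tune the entries so that the Perron--Frobenius root is greater than $4$, and guarantee irreducibility of degree $d$ by a reduction-modulo-$p$ or Eisenstein-type criterion on the chosen intersection numbers; letting the length of the chain range from $1$ (giving $\mu$ rational, hence $d=1$) up to the maximal value compatible with the genus then yields all intermediate degrees. Since genus is additive under the gluing, one can always reach the bound $d=g$.

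The main obstacle is the simultaneous fulfilment of the three independent constraints --- prescribed singularity profile, prescribed connected component, and prescribed eigenvalue degree --- inside a single family of filling pairs. I expect the bulk of the work to be a case analysis along the Kontsevich--Zorich classification, exhibiting for each component type (the principal components, the even- and odd-spin components, and the exceptional hyperelliptic components of $\mathcal{H}(2g-2)$ and $\mathcal{H}(g-1,g-1)$) a base block that lies in that component and accepts the degree block without altering its invariants. The two delicate points are verifying that attaching $D_d$ preserves the spin parity and does not accidentally lower the degree of $\mu$ through an unexpected factorisation of the characteristic polynomial; keeping irreducibility under control uniformly across the whole parameter range is where I anticipate the real effort.
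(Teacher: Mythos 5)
Your opening reduction is the same as the paper's: by Thurston--Veech, $\lambda+\lambda^{-1}=\mu-2$ where $\mu$ is the Perron--Frobenius eigenvalue of $XX^\top$, so the trace field is $\mathbb{Q}(\mu)$ and everything comes down to realising a prescribed degree for $\mu$ in each component. But the mechanism you propose for controlling that degree does not work. You want to glue a base block to a degree block $D_d$ so that ``$XX^\top$ is block-triangular with the Perron--Frobenius eigenvalue coming from $D_d$''. The matrix $XX^\top$ is symmetric, so block-triangular means block-diagonal, which forces the configuration $\alpha\cup\beta$ to be disconnected --- contradicting filling. And once the blocks are coupled (as they must be), Perron--Frobenius theory for irreducible nonnegative matrices gives $\rho(XX^\top)$ \emph{strictly greater} than the spectral radius of every proper principal submatrix, so $\mu$ is never an eigenvalue of $D_d$ and its degree is a priori unrelated to $d$. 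This also undermines your route to the intermediate degrees $1<d<g$: after gluing, the characteristic polynomial has degree equal to the total number of cylinders, and to get $\deg\mu=d$ you must force the complementary factor to be degenerate (rational roots). Your proposal contains no device for this. The paper supplies exactly that device: a star-like configuration with parameters $(y,y_1,\dots,y_{g-1})$, an explicit characteristic polynomial (Lemma~\ref{charpoly}), the specialisation of $g-d+1$ parameters $y_i$ to $1$ so that $(t-1)^{g-d}$ splits off, irreducibility of the residual degree-$d$ factor in $\mathbb{Z}[t,y]$ via Eisenstein in $(\mathbb{Z}[t])[y]$ (Lemma~\ref{irred}), and finally Hilbert's irreducibility theorem to specialise $y\in\mathbb{Z}$. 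Your one-line mention of ``a reduction-modulo-$p$ or Eisenstein-type criterion'' is applied to the wrong object (the isolated block $D_d$, whose spectrum is irrelevant after gluing), so the core of the argument is missing.

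A second, independent problem is your claim that the Kontsevich--Zorich invariants are ``inherited from the base block''. Spin parity does behave reasonably under the paper's insertions (and is verified there by explicit Arf-invariant computations in each family), but hyperellipticity is a \emph{global} property: the involution must act on the entire glued surface, and by the cylinder-counting argument of Remark~\ref{rk:not:hyp} a hyperelliptic surface admits very few cylinders compatible with the involution, so attaching a generic chain $D_d$ to a hyperelliptic base destroys hyperellipticity. This is why the paper treats $\mathcal{H}^{\mathrm{hyp}}(2g-2)$ and $\mathcal{H}^{\mathrm{hyp}}(g-1,g-1)$ with globally symmetric staircase models, where $XX^\top$ is a Jacobi matrix and irreducibility requires a different input: strict interlacing of eigenvalues via oscillatory matrices (Gantmacher--Krein, Ando) before Eisenstein and Hilbert can be invoked, with the extremal degree $d=g$ needing yet another model. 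Your proposal correctly anticipates a case analysis along the classification, but the modular gluing framework it rests on fails precisely in these cases, so the gap is structural rather than a matter of unfinished verification.
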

Theorem~\ref{thm:trace} completely answers the question about trace field degrees, per components of strata of Abelian differentials. However Theorem~\ref{thm:stretch} is not quite complete since mapping classes with odd degree stretch factors can arise as product of two affine multitwists. We leave open the following question:
\begin{que}
For a given connected component $\mathcal C$ of a stratum of Abelian differentials on Riemann surfaces of genus~$g$, which odd integer $3\le d\le g$ can arise as the degree of a stretch factor of a product of two affine multitwists on a surface belonging to $\mathcal C$?
\end{que}
Stretch factor degrees and trace field degrees are closely related. Since~$\lambda$ is a root of the polynomial $t^2-(\lambda+\lambda^{-1})t+1$, the degree of~$\lambda$ over~$K$ is either one or two. The degree one case corresponds to pseudo-Anosov homeomorphisms with vanishing SAF invariant by a result of Calta and Schmidt~\cite{CS13}, see also Strenner's article~\cite{Strenner:saf} and~\cite{AY} for the first known example. As a key step in proving Theorem~\ref{thm:stretch} we present a novel nonsplitting criterion stating that the degree of the field extension~$\mathbb{Q}(\lambda) : \mathbb{Q}(\lambda+\lambda^{-1})$ equals two under certain conditions in  Thurston--Veech's construction, see Theorem~\ref{thm:criterion} in~\S \ref{sec:nonsplitting}.

Another variation of Thurston's remark concerns subgroups of the mapping class group. The following question was asked to us by Dan~Margalit. Instead of fixing a stratum,
one may fix a subgroup: {\em which algebraic degrees are attained in various infinite index subgroups of the mapping class group, such as the Torelli group or other normal subgroups?} For an integer $p$, the level $p$ congruence subgroup $\mathrm{Mod}_g(p)$ is the subgroup of the mapping class group consisting of mapping classes that act trivially on $H_1(\Sigma; \mathbb Z/p)$. Since an affine multitwist is acting as a product of commuting transvection matrices on the homology, by taking a suitable power, we deduce: 
\begin{cor}\label{cor:congruence}
For every $p>1$, every integer~$1\le d\le g$ is realised as the degree of the trace field of a pseudo-Anosov mapping class in the level $p$ congruence subgroup $\mathrm{Mod}_g(p)$.
\end{cor}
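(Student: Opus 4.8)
The plan is to take the pseudo-Anosov map produced by Theorem~\ref{thm:trace} and replace it by a high power. Fix $p>1$ and $1\le d\le g$, and let $f$ be a product of two affine multitwists on a translation surface $(X,\omega)$ in the prescribed connected component, pseudo-Anosov with stretch factor $\lambda$ and trace field $\mathbb{Q}(\lambda+\lambda^{-1})$ of degree $d$, as furnished by Theorem~\ref{thm:trace}. Since any power of a pseudo-Anosov map is again pseudo-Anosov, it suffices to find an integer $N$ for which $f^N$ lies in $\mathrm{Mod}_g(p)$ and still has a degree-$d$ trace field. Getting into the congruence subgroup is the soft part: each affine multitwist acts on $H_1(\Sigma;\mathbb{Z})$ as a product of commuting transvections, so $f$ acts as an element $A\in\mathrm{Sp}(2g,\mathbb{Z})$, whose reduction modulo $p$ has finite order $N_0$ in the finite group $\mathrm{Sp}(2g,\mathbb{Z}/p)$. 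Then $f^{N_0}$, and hence every $f^{kN_0}$, acts trivially on $H_1(\Sigma;\mathbb{Z}/p)$, that is, lies in $\mathrm{Mod}_g(p)$.

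The content lies in checking that the trace field does not shrink under taking powers. The map $f^N$ has stretch factor $\lambda^N$, so its trace field is $\mathbb{Q}(\lambda^N+\lambda^{-N})$; as $\lambda^N+\lambda^{-N}$ is an integer polynomial in $\lambda+\lambda^{-1}$ we always have $\mathbb{Q}(\lambda^N+\lambda^{-N})\subseteq\mathbb{Q}(\lambda+\lambda^{-1})$, and the goal is equality for suitable $N$. Writing $\theta=\lambda+\lambda^{-1}$ with Galois conjugates $\theta_1=\theta,\theta_2,\dots,\theta_d$, and choosing for each $j$ a conjugate $\mu_j$ of $\lambda$ with $\mu_j+\mu_j^{-1}=\theta_j$ and $\mu_1=\lambda$, the conjugates of $\lambda^N+\lambda^{-N}$ are the numbers $\mu_j^N+\mu_j^{-N}$. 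Hence $\mathbb{Q}(\lambda^N+\lambda^{-N})=\mathbb{Q}(\theta)$ already when $\mu_1^N+\mu_1^{-N}$ differs from $\mu_j^N+\mu_j^{-N}$ for every $j\ge 2$, since then the fibre over $\mu_1^N+\mu_1^{-N}$ is a single point and Galois transitivity forces $[\mathbb{Q}(\theta):\mathbb{Q}(\lambda^N+\lambda^{-N})]=1$.

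This is where the bi-Perron property of the stretch factor enters, and it is the main obstacle. For $j\ge 2$ the conjugate $\mu_j$, being different from $\lambda$ and $\lambda^{-1}$, satisfies $\lambda^{-1}<|\mu_j|<\lambda$, and the same holds for $\mu_j^{-1}$, so $\rho_j:=\max(|\mu_j|,|\mu_j^{-1}|)<\lambda$. Therefore $|\mu_j^N+\mu_j^{-N}|\le 2\rho_j^N$, whereas $\mu_1^N+\mu_1^{-N}=\lambda^N+\lambda^{-N}>\lambda^N$. Since there are only finitely many $j$ and each $\rho_j<\lambda$, there is a threshold $N_1$ beyond which $\lambda^N$ strictly dominates every $2\rho_j^N$, so that $\mu_1^N+\mu_1^{-N}$ is strictly larger in modulus than all the other conjugates. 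Choosing $N=kN_0$ with $k$ large enough that $N\ge N_1$ then produces a pseudo-Anosov map $f^N\in\mathrm{Mod}_g(p)$ whose trace field $\mathbb{Q}(\lambda^N+\lambda^{-N})=\mathbb{Q}(\lambda+\lambda^{-1})$ has degree $d$, completing the proof. The congruence condition required only a finiteness argument, whereas preserving the degree rests on the dominance of $\lambda$ over its remaining conjugates afforded by bi-Perronness.
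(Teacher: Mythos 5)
Your proposal is correct, but it takes a genuinely different route from the paper. The paper never passes to a power of the pseudo-Anosov map itself: it raises each \emph{multitwist} to the $p$-th power, observing from the transvection formula $[\gamma]\mapsto[\gamma]+i(\gamma,\alpha_i)[\alpha_i]$ that $T^p_{\alpha_i}\in\mathrm{Mod}_g(p)$, hence, the twists along disjoint curves commuting, $T^p_\alpha, T^p_\beta$ and the whole group $\langle T^p_\alpha, T^p_\beta\rangle$ lie in $\mathrm{Mod}_g(p)$; it then concludes in one line from the fact that all pseudo-Anosov elements of $\langle T_\alpha, T_\beta\rangle$ have the same trace field, this field being an invariant of the underlying translation surface. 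This gives the explicit uniform exponent $p$ and keeps the resulting map a product of two affine multitwists, matching the phrasing of Theorem~\ref{thm:trace}. You instead take $f=T_\alpha\circ T_\beta$ and replace the invariance of the trace field by a direct Galois argument: the conjugates of $\lambda^N+\lambda^{-N}$ are the $\mu_j^N+\mu_j^{-N}$, and Fried's strict bi-Perron property ($\lambda^{-1}<|\mu_j|<\lambda$ for conjugates $\mu_j\neq\lambda^{\pm1}$, which holds here since $\theta_j\neq\theta_1$ forces $\mu_j\neq\lambda^{\pm 1}$) makes $\lambda^N+\lambda^{-N}$ strictly dominant in modulus for $N$ large, so its Galois fibre is a singleton and the degree cannot drop; finiteness of $\mathrm{Sp}(2g,\mathbb{Z}/p)$ then places $f^{kN_0}$ in $\mathrm{Mod}_g(p)$. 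Each approach buys something: yours is more general and self-contained modulo Fried's theorem --- it shows that a suitable power of \emph{any} pseudo-Anosov map lies in $\mathrm{Mod}_g(p)$ with trace field degree preserved, without invoking the Teichm\"uller-theoretic invariance --- at the cost of a non-explicit exponent; the paper's is shorter, explicit, and stays within products of affine multitwists. Two side remarks: in fact $\mathbb{Q}(\lambda^N+\lambda^{-N})=\mathbb{Q}(\lambda+\lambda^{-1})$ for \emph{every} $N\ge 1$, since $f$ and $f^N$ are affine on the same flat surface, so your large-$N$ threshold is a harmless weakening; and in the Thurston--Veech setting the dominance you need also follows without Fried, because the numbers $\theta_j+2$ are Galois conjugates of the Perron--Frobenius eigenvalue of the symmetric nonnegative matrix $XX^\top$ and hence real eigenvalues of it, strictly smaller than the dominant one.
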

\begin{proof}[Proof of Corollary~\ref{cor:congruence}]
Fix an integer~$1\le d\le g$ and let $\alpha = \alpha_1\cup\dots\cup\alpha_n$ and $\beta = \beta_1 \cup \dots \cup \beta_m$ be two multicurves, with $n$ and $m$ components respectively, given by Theorem~\ref{thm:trace}. On the homology level, the action of the Dehn twist $T_{\alpha_i}$ along the curve $\alpha_i$ is given by $[\gamma] \mapsto [\gamma] + i(\gamma,\alpha_i)[\alpha_i]$, where $i(\cdot,\cdot)$ is the algebraic intersection form on $H_1(\Sigma; \mathbb Z)$. In particular one sees that $T^p_{\alpha_i} \in \mathrm{Mod}_g(p)$. Since $\alpha_i$ are pairwise disjoint, $T_{\alpha_i}$ are pairwise commuting mapping classes, and $T^p_\alpha=(T_{\alpha_1}\cdots T_{\alpha_n})^p=T^p_{\alpha_1}\cdots T^p_{\alpha_n} \in \mathrm{Mod}_g(p)$. Therefore the group $\langle T^p_\alpha, T^p_\beta \rangle$ is fully contained in $\mathrm{Mod}_g(p)$. Since all pseudo-Anosov mapping classes in $\langle T_\alpha, T_\beta \rangle$ have the same trace field, this leads to the result.
\end{proof}
\subsection*{Square-tiled surfaces}
The case when $[\mathbb{Q}(\lambda+\lambda^{-1}):\mathbb Q]=1$ plays a special role in Teichm\"uller theory, and our theorems are  well-known in this context. The translation surfaces admitting such pseudo-Anosov maps are also called arithmetic surfaces, or square-tiled surfaces, since they are torus coverings~\cite{Gutkin:Judge}. In particular, this implies that the field extension $\mathbb{Q}(\lambda) : \mathbb{Q}(\lambda+\lambda^{-1})$ has degree two.

\subsection*{Outline of the proof of the main results}
Let~$\mathcal{H}(k_1, k_2, \dots, k_m)$ be a given stratum of Abelian differentials in genus~$g$. Fix some $2 \leq d\leq g$. This is the degree of a trace field we want to construct. In Thurston--Veech's construction, the stretch factor~$\lambda$ of~$T_\alpha \circ T_\beta$ is related to the geometric intersection matrix of~$\alpha$ and~$\beta$ as follows: the number $\lambda + \lambda^{-1} + 2$ equals the Perron-Frobenius eigenvalue of~$XX^\top$. In order to control the degree of~$\lambda+\lambda^{-1}$, we therefore need to control the degree of the Perron-Frobenius eigenvalue of~$XX^\top$. Roughly, our strategy consists of the following four steps.
\smallskip

\emph{Step 1) construct examples.} For positive integers $y$, $y_i$, $i=1,\dots,g-1$, we start by constructing a square-tiled surface $(X,\omega) \in \mathcal{H}(k_1, k_2, \dots, k_m)$ depending on~$y$, $y_i$. We think of the numbers $y$, $y_i$ as variables that we specify in the following. Applying Thurston--Veech's construction using the core curves of the horizontal and vertical annuli of~$(X,\omega)$ gives us a $g\times g$ matrix $XX^\top$. 
\smallskip

\emph{Step 2) specify the~$y_i$.} 
The characteristic polynomial $p_g(t,y)\in\mathbb{Z}[t,y]$ of the matrix $XX^\top$ satisfies $p_g(t,y)=(t-1)^{g-d}p_d(t,y)$ if we set~$g-d+1$ of the~$g-1$ parameters~$y_i$ equal to~$1$. 
Furthermore, if all the other $y_i$ are pairwise different, then $p_d(t,y)$ is shown to be irreducible in $\mathbb{Z}[t,y]$ in~\S\ref{sec:calculating}.
\smallskip

\emph{Step 3) specify~$y$.}
Hilbert's irreducibility theorem~\cite{Lang} furnishes infinitely many integer specifications of $y$ such that $p_d(t,y)\in\mathbb{Z}[t]$ is irreducible. By our construction, all these choices of parameters correspond to surfaces in $\mathcal{H}(k_1, k_2, \dots, k_m)$. Furthermore,  the trace field is generated by the Perron-Frobenius eigenvalue of~$XX^\top$, which has degree $d$ as desired.
\smallskip

\emph{Step 4) Apply the nonsplitting criterion.} Finally, we apply Theorem~\ref{thm:criterion} to deduce that the stretch factor~$\lambda$ of~$T_\alpha \circ T_\beta$ is of degree~$2d$ for all the specifications of~$y_i$ and~$y$ as above. 
\medskip

This description of the strategy does not yet take into account the connected components we want to reach, but basically the same idea can be applied in order to deal with all connected components. However, we need to take variations of the families of examples we consider in order to find surfaces belonging to all of them.  This is dealt with in~\S\ref{sec:cc}.

\subsection*{Acknowledgments} This collaboration began during a visit of the second author in Grenoble. This work has been partially supported by the LabEx PERSYVAL-Lab (ANR-11-LABX-0025-01) funded by the French program Investissement d’avenir. The authors thank Jean-Claude Picaud for discussions on Thurston's construction and stretch factor degrees, and Dan Margalit and Curt McMullen for feedback on an earlier version of this article. The authors also thank the anonymous referees for their helpful comments and suggestions.

\section{A nonsplitting criterion} 
\label{sec:nonsplitting}

The goal of this section is to present an algebraic criterion that allows us to deduce that the degree of the field 
extension~$\mathbb{Q}(\lambda) : \mathbb{Q}(\lambda+\lambda^{-1})$ equals two for certain products of multitwists. 
Let~$\alpha = \alpha_1 \cup \dots \cup \alpha_n$ and~$\beta = \beta_1 \cup \dots \cup \beta_m$ be two 
multicurves with~$n$ and~$m$ components, respectively, that fill a surface~$\Sigma$ and intersect minimally. 
Let~$X$ be their geometric intersection matrix, that is, the~$n\times m$ matrix whose~$ij$-th coefficient equals the 
geometric intersection number of~$\alpha_i$ and~$\beta_j$. We assume that the Perron-Frobenius eigenvalue~$\mu^2$ 
of~$XX^\top$ is of degree~$d$. Furthermore, we let~$\Omega = \left(\begin{smallmatrix}0 & X \\ X^\top & 0\end{smallmatrix}\right)$. 
For a symmetric matrix $A$, we denote by $\sigma(A)$ its signature {\em i.e.}  the number of positive eigenvalues minus the number of negative eigenvalues. We will also denote by  $\mathrm{null}(A)$ its nullity {\em i.e.} the dimension of its kernel.

\begin{lem} 
\label{dissprops}
The following properties hold.
\begin{enumerate} 
\item 
\label{itemone}
The number~$\sigma(\Omega + 2I) + \mathrm{null}(\Omega + 2I)$ equals the number of eigenvalues of~$\Omega$ in the interval~$[-2,2]$.
\item 
\label{itemtwo}
The eigenvalues~$\lambda_i$ of~$M = \begin{pmatrix}I & X \\ 0 & I\end{pmatrix}\begin{pmatrix}I & 0 \\ -X^\top & I\end{pmatrix}$ 
are related to the eigenvalues~$\mu_i$ of~$\Omega$ by the equation~$\mu_i^2 = 2 - \lambda_i - \lambda_i^{-1}$. 
\end{enumerate}
\end{lem}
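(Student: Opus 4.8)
The plan is to prove the two parts of Lemma~\ref{dissprops} separately, as they concern rather different aspects of the matrices involved.

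\textbf{Part (1).} For the first item, I would invoke the spectral interpretation of signature and nullity via Sylvester's law of inertia together with a change of variables. Since $\Omega$ is symmetric, it is diagonalizable with real eigenvalues $\mu_1,\dots,\mu_{n+m}$. The matrix $\Omega + 2I$ is then symmetric with eigenvalues $\mu_i + 2$. By the definition of signature as (number of positive eigenvalues) $-$ (number of negative eigenvalues), and nullity as the multiplicity of the zero eigenvalue, I would compute
\[
\sigma(\Omega + 2I) + \mathrm{null}(\Omega + 2I) = \#\{i : \mu_i + 2 > 0\} - \#\{i : \mu_i + 2 < 0\} + \#\{i : \mu_i + 2 = 0\}.
\]
The key observation is that this combination counts eigenvalues $\mu_i$ with a specific sign pattern relative to $-2$, not directly the interval $[-2,2]$. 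To recover the interval statement, I expect one must exploit a symmetry of the spectrum of $\Omega$: because $\Omega = \left(\begin{smallmatrix}0 & X \\ X^\top & 0\end{smallmatrix}\right)$ is block-antidiagonal, its nonzero eigenvalues come in pairs $\pm\mu$ (if $(u,v)$ is an eigenvector for $\mu$, then $(u,-v)$ is one for $-\mu$). This symmetry means that the number of eigenvalues in $[-2,2]$ equals (counting with the symmetric pairing) exactly the quantity computed above, since eigenvalues below $-2$ are paired with eigenvalues above $2$. I would make this pairing precise and verify the bookkeeping carefully.

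\textbf{Part (2).} For the second item, the core is a direct computation combined with the relationship between $M$ and $\Omega$. First I would multiply out
\[
M = \begin{pmatrix}I & X \\ 0 & I\end{pmatrix}\begin{pmatrix}I & 0 \\ -X^\top & I\end{pmatrix} = \begin{pmatrix}I - XX^\top & X \\ -X^\top & I\end{pmatrix}.
\]
The matrix $M$ is a product of a lower and an upper unipotent matrix, hence has determinant $1$; it is in fact a symplectic matrix (this is the homological action of the product of multitwists up to sign conventions). The plan is to relate the characteristic data of $M$ to that of $\Omega$. The natural approach is to observe that $M$ can be written in terms of $\Omega$: computing $\Omega^2 = \left(\begin{smallmatrix}XX^\top & 0 \\ 0 & X^\top X\end{smallmatrix}\right)$, one sees that the relevant blocks appearing in $M$ are governed by $\Omega$ and $\Omega^2$. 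I would look for a polynomial or rational relationship $M = \phi(\Omega)$ on a suitable invariant subspace, or directly compare eigenvalues. Concretely, if $\mu_i$ is an eigenvalue of $\Omega$, the target claim $\mu_i^2 = 2 - \lambda_i - \lambda_i^{-1}$ rearranges to $\lambda_i + \lambda_i^{-1} = 2 - \mu_i^2$, i.e. $\lambda_i$ is a root of $t^2 - (2 - \mu_i^2)t + 1 = 0$. I would verify this by checking that $M$ satisfies the matrix identity $M + M^{-1} = 2I - \Omega^2$ (equivalently $M + M^{-1} + \Omega^2 = 2I$), after passing to a common eigenbasis or using the explicit block forms of $M$, $M^{-1}$, and $\Omega^2$. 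This reduces the eigenvalue correspondence to a single clean matrix equation.

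\textbf{Main obstacle.} I expect the principal difficulty to lie in Part (2), specifically in handling the eigenvalue correspondence correctly on the kernel and on the blocks of differing dimensions $n$ and $m$. The matrix $\Omega$ acts on an $(n+m)$-dimensional space while $XX^\top$ is $n\times n$, so the nonzero spectra of $XX^\top$ and $X^\top X$ coincide but the multiplicities of the zero eigenvalue differ by $|n-m|$; tracking how these zero modes of $\Omega$ correspond to eigenvalues $\lambda_i = 1$ of $M$ (coming from $\mu_i = 0$) requires care. The matrix identity $M + M^{-1} = 2I - \Omega^2$ should hold globally and sidestep these multiplicity issues, so establishing that identity cleanly from the block forms is the crux; once it is in hand, the eigenvalue relation follows by simultaneous diagonalization on the symmetric part and a direct check on the remaining unipotent blocks.
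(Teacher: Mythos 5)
Your proposal is correct. For part~(2) it is essentially the paper's own argument: the same block computation of~$M$ and~$M^{-1}$ yields the identity~$M+M^{-1}=2I-\Omega^2$ (the paper writes it as~$\Omega^2=2I-M-M^{-1}$), and the eigenvalue relation follows by spectral mapping. One caution on your wording: ``passing to a common eigenbasis'' or ``simultaneous diagonalization'' cannot be taken literally, because~$M$ need not be diagonalizable (while~$\Omega^2$, being symmetric, always is). The clean fix, and the one the paper uses, is to put~$M$ in Jordan normal form; since~$M^{-1}$, and hence~$\Omega^2=2I-M-M^{-1}$, become upper triangular in that basis, the relation~$\mu_i^2=2-\lambda_i-\lambda_i^{-1}$ can be read off the diagonal with correct algebraic multiplicities. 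Your fallback of ``a direct check on the remaining unipotent blocks'' is this same idea in looser language, so this is a presentational point rather than a gap, and you rightly observe that the global matrix identity sidesteps the multiplicity mismatch between~$XX^\top$ and~$X^\top X$. For part~(1), the paper gives no argument at all (it cites Lemma~3.7 of~\cite{Ldiss}), so your symmetry proof is a genuine self-contained addition, and the bookkeeping does close: writing~$N=n+m$, $q=\#\{i:\mu_i<-2\}$ and~$z=\#\{i:\mu_i=-2\}$, one gets~$\sigma(\Omega+2I)+\mathrm{null}(\Omega+2I)=(N-q-z)-q+z=N-2q$, and the involution~$(u,v)\mapsto(u,-v)$ shows the spectrum of~$\Omega$ is symmetric under~$\mu\mapsto-\mu$, whence~$q=\#\{i:\mu_i>2\}$ and~$N-2q$ is exactly the number of eigenvalues in~$[-2,2]$. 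A final nitpick: calling~$M$ symplectic is only literally meaningful when~$n=m$; all you actually use is~$\det M=1$ and the invertibility of~$M$.
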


\begin{proof}
The first property is exactly Lemma~3.7 in~\cite{Ldiss}. The second property is Proposition~3.3(b) in~\cite{Ldiss}; 
as the proof in~\cite{Ldiss} does not explicitly deal with the case where~$M$ is not diagonalisable, we present a complete argument here. 
We first calculate
\[
M =  \begin{pmatrix}I & X \\ 0 & I\end{pmatrix}\begin{pmatrix}I & 0 \\ -X^\top & I\end{pmatrix} = \begin{pmatrix} I - XX^\top & X \\ -X^\top & I \end{pmatrix}
\]
and note that its inverse is given by
\[
M^{-1} = \begin{pmatrix} I & -X \\ X^\top & I - X^\top X \end{pmatrix}.
\]
One directly verifies the equation~$\Omega^2 = 2I - M - M^{-1}$. In order to obtain the same equation for all the eigenvalues 
(counting multiplicity), we change basis such that~$M$ is in Jordan normal form. Note that in the new basis, also the matrix~$M^{-1}$ 
becomes a block diagonal matrix, where all the blocks are of upper triangular form and correspond to the Jordan blocks of~$M$. 
In particular, also the matrix~$\Omega^2$ becomes upper triangular in the new basis, and the equation for the 
eigenvalues,~$\mu_i^2 = 2 - \lambda_i + \lambda_i^{-1}$ (counting multiplicity), can be read off from the diagonal entries of the 
matrix equation. 
\end{proof}

Our criterion for the construction of pseudo-Anosov maps with stretch factors of controlled degree is the following.

\begin{thm}
\label{thm:criterion}
Let~$\alpha$ and~$\beta$ be two 
multicurves with~$n$ and~$m$ components, respectively, that fill a surface~$\Sigma$ and intersect minimally. 
Let~$X$ be their geometric intersection matrix and assume that the Perron-Frobenius eigenvalue~$\mu^2$ 
of~$XX^\top$ is of degree~$d$. Furthermore, set~$\Omega = \left(\begin{smallmatrix}0 & X \\ X^\top & 0\end{smallmatrix}\right)$. 
If we have~$n+m>\sigma(\Omega + 2I) + \mathrm{null}(\Omega + 2I) > m+n-2d$, then 
the mapping class~$T_\alpha \circ T_\beta$ is pseudo-Anosov with stretch factor~$\lambda$ of degree~$2d$.
\end{thm}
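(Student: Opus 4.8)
The plan is to reduce the whole statement to a single nonsplitting assertion and then prove that assertion by a Galois-theoretic eigenvalue count. First I would record how the hypotheses feed information into $\lambda$. Since $n+m>\sigma(\Omega+2I)+\mathrm{null}(\Omega+2I)$, the first part of Lemma~\ref{dissprops} shows that $\Omega$ has at least one eigenvalue outside $[-2,2]$; as the eigenvalues of $\Omega$ square to the eigenvalues of $XX^\top$, the Perron--Frobenius eigenvalue $\mu^2$ must exceed $4$. By Thurston's theorem this already gives that $T_\alpha\circ T_\beta$ is pseudo-Anosov, and the stretch factor satisfies $\lambda+\lambda^{-1}=\mu^2-2$ (the Thurston--Veech relation, made precise by the second part of Lemma~\ref{dissprops}). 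Consequently the trace field is $K=\mathbb{Q}(\lambda+\lambda^{-1})=\mathbb{Q}(\mu^2)$, of degree $d$ by assumption, and $\lambda$ is a root of $t^2-(\mu^2-2)t+1\in K[t]$. The theorem therefore reduces to showing that this quadratic is irreducible over $K$, i.e. that $[\mathbb{Q}(\lambda):K]=2$; combined with $[K:\mathbb{Q}]=d$ this yields $\deg\lambda=2d$.

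Next I would convert the signature hypothesis into an eigenvalue count. Writing $S=\sigma(\Omega+2I)+\mathrm{null}(\Omega+2I)$, the first part of Lemma~\ref{dissprops} identifies $S$ with the number of eigenvalues of $\Omega$ lying in $[-2,2]$, so $n+m-S$ counts the eigenvalues of $\Omega$ of absolute value strictly larger than $2$. Since the nonzero spectrum of $\Omega^2$ is that of $XX^\top$ taken twice and the spectrum of $\Omega$ is symmetric about $0$, the eigenvalues of $\Omega$ outside $[-2,2]$ are exactly the numbers $\pm\sqrt{\rho}$ for eigenvalues $\rho>4$ of $XX^\top$. Hence $n+m-S=2\cdot\#\{\rho>4\}$, eigenvalues counted with multiplicity. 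The second hypothesis $n+m-S<2d$ then reads $\#\{\text{eigenvalues of }XX^\top\text{ exceeding }4\}\le d-1$.

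Finally I would run the nonsplitting argument. Because $XX^\top$ is a symmetric positive semidefinite integer matrix, every conjugate of the algebraic integer $\mu^2$ is again an eigenvalue of $XX^\top$, hence real and nonnegative; moreover $\mu^2$ has exactly $d$ distinct conjugates $\nu_1=\mu^2,\dots,\nu_d$, and $K$ is totally real. Suppose, for contradiction, that the quadratic splits, so that $\lambda\in K$ and $\mathbb{Q}(\lambda)=K$ has degree $d$. Applying the $d$ real embeddings $\sigma_j$ of $K$ to $\lambda+\lambda^{-1}=\mu^2-2$ gives real conjugates $\lambda_j=\sigma_j(\lambda)$ with $\lambda_j+\lambda_j^{-1}=\nu_j-2$. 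Reality of $\lambda_j$ forces $|\nu_j-2|\ge 2$, i.e. $\nu_j\le 0$ or $\nu_j\ge 4$; the value $\nu_j=0$ is impossible since $0$ is not a conjugate of $\mu^2\neq 0$, and the value $\nu_j=4$ is impossible since it would force the conjugate $\lambda_j=1$, which no root of the irreducible minimal polynomial of $\lambda>1$ can equal. Thus every $\nu_j>4$, producing $d$ distinct eigenvalues of $XX^\top$ above $4$ and contradicting the bound $\#\{\rho>4\}\le d-1$. Therefore the quadratic is irreducible over $K$ and $\deg\lambda=2d$.

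The technical heart, and the step I expect to demand the most care, is this last one: matching the signature inequality to the correct strict eigenvalue count and then closing off the boundary values $\nu_j\in\{0,4\}$. These boundary cases are exactly where the argument could fail, and handling them cleanly relies on two facts that must be isolated explicitly, namely that $K$ is totally real (so that splitting genuinely forces every conjugate of $\lambda$ to be real) and that an irreducible minimal polynomial of $\lambda>1$ has no root equal to $1$. The remaining ingredients, pseudo-Anosovness and the identification of the trace field, then follow formally from Thurston's theorem and from Lemma~\ref{dissprops}.
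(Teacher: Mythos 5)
Your proof is correct, and while its overall skeleton matches the paper's (pseudo-Anosovness from part~(1) of Lemma~\ref{dissprops}, the trace relation $\lambda+\lambda^{-1}=\mu^2-2$, reduction to showing $[\mathbb{Q}(\lambda):\mathbb{Q}(\lambda+\lambda^{-1})]=2$), your nonsplitting step runs by a genuinely different mechanism. The paper works with the matrix $M$ of part~(2): the Galois conjugates of $\lambda$ and $\lambda^{-1}$ are among the eigenvalues of $-M$, at most $2d-1$ of which lie off the unit circle by the signature hypothesis, so by pigeonhole some conjugate lies on the unit circle, forcing the minimal polynomial to be reciprocal and hence $\lambda$ and $\lambda^{-1}$ to be Galois conjugate. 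You instead assume splitting, observe that $K=\mathbb{Q}(\mu^2)$ is totally real because every conjugate $\nu_j$ of $\mu^2$ is an eigenvalue of the symmetric positive semidefinite integer matrix $XX^\top$, and push the relation through the $d$ real embeddings to force all $d$ distinct conjugates strictly above $4$, contradicting your count $\#\{\rho>4\}\le d-1$ --- which is exactly the paper's ``at most $2d-1$ eigenvalues of $\Omega$ outside $[-2,2]$'' folded onto the spectrum of $XX^\top$ via the $\pm$-symmetry of the spectrum of $\Omega$. What each route buys: the paper's argument extracts the structural fact that the minimal polynomial of $\lambda$ is reciprocal, at the price of invoking $M$ and part~(2) of Lemma~\ref{dissprops} (which you cite only for the trace relation, though that follows from Thurston--Veech's construction directly, so in effect you need only part~(1)); your argument is more elementary, never leaves the spectrum of $XX^\top$, and its only delicate points --- the boundary values $\nu_j\in\{0,4\}$ --- you close off correctly (a conjugate of the nonzero algebraic number $\mu^2$ cannot be $0$, and $\nu_j=4$ would make $1$ a root of the irreducible minimal polynomial of $\lambda>1$, which is impossible). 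Both of your counting steps use the strict inequalities in the hypothesis exactly where strictness is needed, so the proof is complete as written.
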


\begin{remark} \emph{This criterion is particularly strong in case~$n=m=d$, that is, when~$\alpha$ and~$\beta$ have the same 
number of components and if the characteristic polynomial of the matrix~$XX^\top$ is irreducible. 
In this case,~$$2n >\sigma(\Omega + 2I) + \mathrm{null}(\Omega + 2I) > 0$$ is sufficient to ensure that 
the mapping class~$T_\alpha \circ T_\beta$ is pseudo-Anosov with stretch factor~$\lambda$ of degree~$2d$.}
\end{remark}

\begin{proof}
We first ensure that the mapping class~$T_\alpha \circ T_\beta$ is pseudo-Anosov. 
If~$n+m>\sigma(\Omega + 2I) + \mathrm{null}(\Omega + 2I)$, then~$\Omega$ has an eigenvalue outside the interval~$[-2,2]$ 
by~(\ref{itemone}) of Lemma~\ref{dissprops}. In particular, the dominating eigenvalue~$\mu$ of~$\Omega$ is larger than~$2$ and the matrix product
~$\left(\begin{smallmatrix} 1 & \mu \\ 0 & 1\end{smallmatrix}\right)\left(\begin{smallmatrix} 1 & 0 \\ -\mu & 1\end{smallmatrix}\right)$ is 
hyperbolic, as its trace~$2-\mu^2$ is larger than~$2$ in modulus. Hence, the mapping class~$T_\alpha \circ T_\beta$ is 
pseudo-Anosov by Thurston--Veech's construction~\cite{Th,Veech:construction}. 

Now, let~$\lambda$ be the stretch factor of the mapping class~$T_\alpha \circ T_\beta$. By Thurston--Veech's construction, 
we have~$\lambda + \lambda^{-1} = \mu^2 -2 $. In particular, we directly observe~$\mathbb{Q}(\lambda+\lambda^{-1}) = \mathbb{Q}(\mu^2).$
Furthermore, the degree of the field extension~$\mathbb{Q}(\lambda):\mathbb{Q}(\lambda+\lambda^{-1})$ is either~$1$ or~$2$. 
It equals~$2$, which is what we want to show, exactly if~$\lambda$ and~$\lambda^{-1}$ are Galois conjugates. 

We now finish the proof by arguing that~$\lambda$ and~$\lambda^{-1}$ are indeed Galois conjugates. By (\ref{itemtwo}) of Lemma~\ref{dissprops}, the dilatation~$\lambda$ is also the leading eigenvalue of~$-M$, where M is the matrix product given in the (\ref{itemtwo}) of 
Lemma~\ref{dissprops}. In particular, the Galois conjugates of~$\lambda$ are among the eigenvalues~$-\lambda_i$ of the matrix~$-M$.
These eigenvalues are in turn related to the eigenvalues~$\mu_i$ of~$\Omega$ by the equation~$\mu_i^2 = 2+\lambda_i + \lambda_i^{-1}$, 
again by Lemma~\ref{dissprops}. Since we have~$\sigma(\Omega + 2I) + \mathrm{null}(\Omega + 2I)>n+m-2d$, 
the matrix~$\Omega$ has at most~$2d-1$ eigenvalues outside the interval~$[-2,2]$. Via the correspondence in Lemma~\ref{dissprops}, 
the matrix~$-M$ hat at most~$2d-1$ eigenvalues that do not lie on the unit circle. In particular, one of the~$2d$ Galois conjugates 
of~$\lambda$ or~$\lambda^{-1}$ (including~$\lambda$ and~$\lambda^{-1}$ themselves) must be on the unit circle by the pigeonhole principle. 
Thus the minimal polynomial of~$\lambda$ or~$\lambda^{-1}$ (and hence of both) is reciprocal and it follows 
that~$\lambda$ and~$\lambda^{-1}$ are Galois conjugates. 
\end{proof}

\section{Strata of Abelian differentials} 
\label{sec:stratum}

In this section, we present a proof of our main results, Theorem~\ref{thm:trace} and Theorem~\ref{thm:stretch}, for each stratum. 
We postpone the more intricate analysis of the connected components to \S\ref{sec:cc}. 
\medskip

Let~$\mathcal{H}(k_1, k_2, \dots, k_m)$ be a stratum of Abelian differentials. Recall that the number of odd~$k_i$ must itself be even, say~$2l$.
Furthermore, if~$g$ is the genus of the underlying topological surface, we have the equality~$2g-2=\sum_{i=1}^m k_i$.

\subsection{Constructing a surface}
\label{sec:construct}
We start by constructing a square-tiled surface.
First, we ensure that we land in the stratum~$\mathcal{H}(k_1, k_2, \dots, k_m)$. 
We start out with a long horizontal square-tiled surface with some large number~$y^2-g+1$ of squares and opposite side identifications, 
see Figure~\ref{horizontal}. 
\begin{figure}[h]
\begin{center}
\def\svgwidth{190pt}
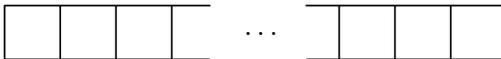
\caption{A horizontal square-tiled surface.}
\label{horizontal}
\end{center}
\end{figure}
The surface obtained by identifying the sides is a torus, and there are no singularities of the flat structure.
We can add an angle of~$4\pi$ to some marked point by inserting a vertical strip of~$y_i + 1$ square tiles, 
as in Figure~\ref{addvertical}. 
\begin{figure}[h]
\begin{center}
\def\svgwidth{240pt}
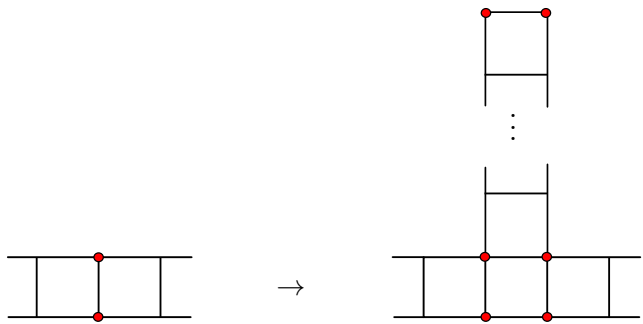
\caption{Inserting a vertical strip of squares creates a cone point with angle~$6\pi$ out of a marked point.}
\label{addvertical}
\end{center}
\end{figure}
We treat the~$y_i \ge 1$ as variables that we will need to specify later on.
This operation can be repeated in order to add an integer multiple of~$4\pi$ to the angle around any cone point or marked point. 
For example, Figure~\ref{add2vertical} indicates how to insert another vertical strip of square tiles in order 
to add~$4\pi$ to the cone angle around a cone point with angle~$6\pi$. 
\begin{figure}[h]
\begin{center}
\def\svgwidth{280pt}
\begingroup%
  \makeatletter%
  \providecommand\color[2][]{%
    \errmessage{(Inkscape) Color is used for the text in Inkscape, but the package 'color.sty' is not loaded}%
    \renewcommand\color[2][]{}%
  }%
  \providecommand\transparent[1]{%
    \errmessage{(Inkscape) Transparency is used (non-zero) for the text in Inkscape, but the package 'transparent.sty' is not loaded}%
    \renewcommand\transparent[1]{}%
  }%
  \providecommand\rotatebox[2]{#2}%
  \newcommand*\fsize{\dimexpr\f@size pt\relax}%
  \newcommand*\lineheight[1]{\fontsize{\fsize}{#1\fsize}\selectfont}%
  \ifx\svgwidth\undefined%
    \setlength{\unitlength}{489.75011474bp}%
    \ifx\svgscale\undefined%
      \relax%
    \else%
      \setlength{\unitlength}{\unitlength * \real{\svgscale}}%
    \fi%
  \else%
    \setlength{\unitlength}{\svgwidth}%
  \fi%
  \global\let\svgwidth\undefined%
  \global\let\svgscale\undefined%
  \makeatother%
  \begin{picture}(1,0.73834537)%
    \lineheight{1}%
    \setlength\tabcolsep{0pt}%
    \put(0,0){\includegraphics[width=\unitlength]{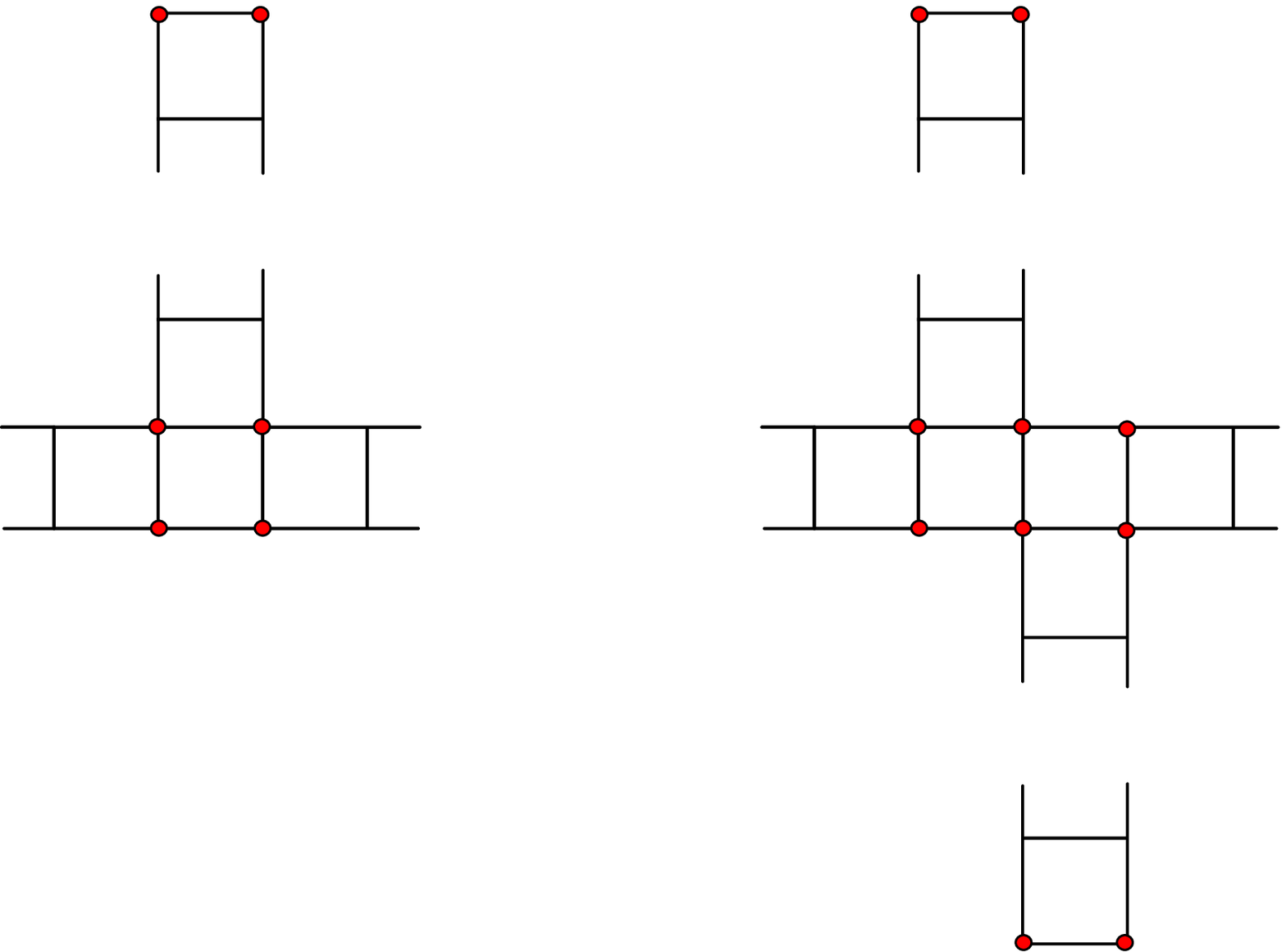}}%
    \put(0.74895173,0.56565494){\color[rgb]{0,0,0}\makebox(0,0)[lt]{\lineheight{1.25}\smash{\begin{tabular}[t]{l}$\vdots$\end{tabular}}}}%
    \put(0.41618891,0.36150213){\color[rgb]{0,0,0}\makebox(0,0)[lt]{\lineheight{1.25}\smash{\begin{tabular}[t]{l}$\to$\end{tabular}}}}%
    \put(0.15477117,0.56565494){\color[rgb]{0,0,0}\makebox(0,0)[lt]{\lineheight{1.25}\smash{\begin{tabular}[t]{l}$\vdots$\end{tabular}}}}%
    \put(0.83148665,0.16123379){\color[rgb]{0,0,0}\makebox(0,0)[lt]{\lineheight{1.25}\smash{\begin{tabular}[t]{l}$\vdots$\end{tabular}}}}%
  \end{picture}%
\endgroup%

\caption{A vertical strip of squares can be inserted in order to add another~$4\pi$ to the angle around a cone point.}
\label{add2vertical}
\end{center}
\end{figure}
Iterating this procedure, we can reach all strata with even multiplicities. 

In order to create odd multiplicities, we insert an L-shaped square-tiled surface 
with~$y_i+1$ tiles, as shown in Figure~\ref{addodd}.
\begin{figure}[h]
\begin{center}
\def\svgwidth{260pt}
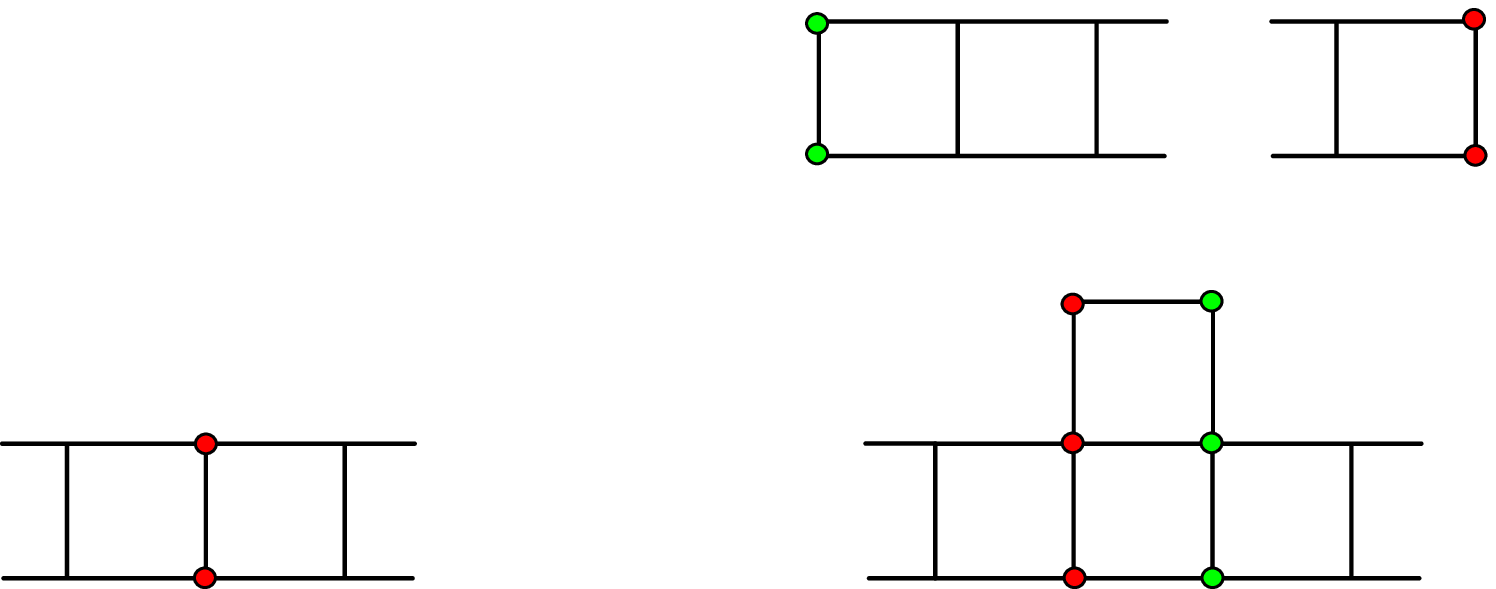
\caption{Inserting an L-shaped square-tiled surface creates two cone angles of~$4\pi$ out of one marked point.}
\label{addodd}
\end{center}
\end{figure}
This creates two cone points of angle~$4\pi$, which is multiplicity one. Recall that there must be an even number~$2l$ of odd 
multiplicities~$k_i$, so we can repeat this step~$l$ times to have the right number of odd multiplicities, and then successively 
add two to the multiplicities by inserting vertical strips as above, until we reach the stratum~$\mathcal{H}(k_1, k_2, \dots, k_m)$. 
Following this procedure, we need to add a total of~$l$ L-shapes and ~$g-l-1$ vertical strips.

\subsection{Calculating the polynomial}
\label{sec:calculating}
The square-tiled surface we construct in Section~\ref{sec:construct} naturally decomposes into horizontal and vertical annuli 
that are one square wide. Let~$X$ be the intersection matrix for the core curves~$\alpha_i$ of the horizontal annuli and the core 
curves~$\beta_j$ of the vertical annuli. We index the rows by horizontal curves and the columns by vertical curves. 
We now describe the matrix~$XX^\top$. Since the curves~$\alpha_i$ and~$\beta_j$ pairwise intersect in a tree-like fashion, 
we use the following way of looking at the computation. The~$i$-th diagonal coefficient equals the number of vertical curves 
intersecting the~$i$-th horizontal curve~$\alpha_i$. Furthermore, an off-diagonal~$ij$-th coefficient is equal to~$1$ if there 
exists a vertical curve intersecting both horizontal curves~$\alpha_i$ and~$\alpha_j$. Otherwise, it equals~$0$. 

In order to write down the matrix~$XX^\top$, we quickly recall our construction. We have one horizontal curve that we start with. 
It intersects~$y^2$ vertical curves. We further have one horizontal curve for each L-shaped surface we inserted, of which there are~$l$ in total. 
These curves respectively intersect~$y_i$ vertical curves, for~$i=1,\dots,l$, and are linked to the starting horizontal curve via an 
intersecting vertical curve. 

For example, if we insert two L-shaped surfaces with~$y_1+1$ and~$y_2+1$ tiles, respectively, 
we obtain the matrix
$$ XX^\top = 
\left(
\begin{array}{ccc}
y^2 & 1 & 1 \\
1 & y_1 & 0 \\
1 & 0 & y_2
\end{array}
\right)$$
with characteristic polynomial obtained by developing the first column of the matrix~$tI - XX^\top$:
\begin{align*}
(t-y^2)(t-y_1)(t-y_2) &- (t-y_2) -(t-y_1) =  \\
&= -y^2 \prod_{i=1}^2 (t-y_i) + t\prod_{i=1}^2 (t-y_i) - \sum_{i=1}^2 \prod_{j\ne i} (t-y_i).
\end{align*}
It is straightforward to generalise the last form of the characteristic polynomial to an arbitrary number~$l$
of inserted L-shaped surfaces. 

Conveniently, the form of the characteristic polynomial turns out to be basically the same even if we insert vertical strips, 
but this needs a more careful calculation. 
We first describe the coefficients of the matrix~$XX^\top$ we get from inserting vertical strips: 
for each vertical surface we insert, we get another~$y_i$ horizontal curves, 
all intersecting a single vertical curve that also intersects the starting horizontal curve.
Here,~$i$ runs from~$l+1$ to~$g-1$. We present the matrix using parameters $b, b_i \in \mathbb R$. 
These parameters are helpful in the proof of Lemma~\ref{irred}, and later in \S\ref{sec:non:hyp:spin0}. 
For the purpose of the calculation of~$XX^\top$ in this section, we simply have~$b = b_i=1$ for all~$i$.  
We write~$\mathbf{b}_{n\times m}$ for the~$n\times m$ matrix with all entries equal to~$b\in \mathbb R$. 
In case~$n=m$, we simplify and write~$\mathbf{b}_n$.

\begin{defi}
\label{def:XX}
For parameters~$b,b_i \in \mathbb{R}$,~$i=1,\dots, l$, we consider the matrix
$$
XX^\top=
\left(
\begin{array}{cccc|cccc}
y^2 & b_1 & \cdots & b_{l} & \mathbf{b}_{1\times y_{l+1}} & \mathbf{1}_{1\times y_{l+2}} & \cdots & \mathbf{1}_{1\times y_{g-1}} \\
b_1 & y_1 &  &  & & & \\
\vdots &  & \ddots &  & & & \\
b_l &  &  & y_l & & & \\
\hline
\mathbf{b}_{y_{l+1}\times 1} & & & & \mathbf{1}_{y_{l+1}} & &\\
\mathbf{1}_{y_{l+2}\times 1} & & & & & \mathbf{1}_{y_{l+2}} & &\\
\vdots & & & & && \ddots &\\
\mathbf{1}_{y_{g-1}\times 1} & & & & & && \mathbf{1}_{y_{g-1}}
\end{array}
\right).
$$
\end{defi}

For the characteristic polynomial of the matrix~$XX^T$, we have the following result.

\begin{lem}
\label{charpoly}
The characteristic polynomial of~$XX^\top$ equals
$$p(t,y,\mathbf{y})=
t^a\left(-y^2\prod_{i=1}^{g-1}(t-y_i) + t\prod_{i=1}^{g-1}(t-y_i) - \sum_{i=1}^{g-1} c_i\prod_{j\ne i}(t-y_j)   \right),
$$
where~$a=\sum_{i=l+1}^{g-1}(y_i-1)$, $c_{l+1}= y_{l+1}b^2$, $c_i=y_i$ for~$i\ge l+2$ and~$c_i=b_i^2$ otherwise.
\end{lem}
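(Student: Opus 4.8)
The plan is to exploit the block structure of $XX^\top$ from Definition~\ref{def:XX} together with the elementary spectral properties of the all-ones blocks $\mathbf{1}_{y_i}$. The key observation is that each such $y_i\times y_i$ block has the vector $\mathbf{1}_{y_i}=(1,\dots,1)^\top$ as an eigenvector with eigenvalue $y_i$, while its orthogonal complement lies entirely in the kernel; moreover, the off-diagonal entries coupling the block of the $i$-th vertical strip to the first coordinate are constant (equal to $b$ when $i=l+1$ and to $1$ when $i\ge l+2$), hence proportional to $\mathbf{1}_{y_i}$. I would therefore conjugate $XX^\top$ by a block-orthogonal matrix $Q=I_{1+l}\oplus Q_{l+1}\oplus\cdots\oplus Q_{g-1}$, where each $Q_i$ is an orthogonal change of basis on the $i$-th vertical block whose first vector is the normalised all-ones vector $\tfrac{1}{\sqrt{y_i}}\mathbf{1}_{y_i}$. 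Since conjugation by an orthogonal matrix preserves the characteristic polynomial, it suffices to compute the characteristic polynomial of $Q^\top(XX^\top)Q$.

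After this conjugation each all-ones block $\mathbf{1}_{y_i}$ becomes $\mathrm{diag}(y_i,0,\dots,0)$, and the constant coupling vector becomes supported on the first (all-ones) coordinate only, equal to $\sqrt{y_i}$ for $i\ge l+2$ and to $b\sqrt{y_{l+1}}$ for $i=l+1$. Consequently the $y_i-1$ complement directions of each vertical block decouple completely: each has a vanishing diagonal entry and no coupling to any other coordinate, so it contributes a factor $t$ to the characteristic polynomial. Collecting them over all vertical strips yields the prefactor $t^a$ with $a=\sum_{i=l+1}^{g-1}(y_i-1)$. A quick degree count confirms the split is exact: $a+g$ equals the size $1+l+\sum_{i=l+1}^{g-1}y_i$ of $XX^\top$.

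What remains is the characteristic polynomial of the $g\times g$ core matrix on the coordinates given by the first curve, the $l$ L-shape curves, and the $g-1-l$ surviving all-ones directions. By construction this core matrix is an \emph{arrowhead} matrix: it is diagonal with entries $y^2,y_1,\dots,y_{g-1}$ apart from its first row and column, whose off-diagonal entries $z_i$ satisfy $z_i=b_i$ for $i\le l$, $z_{l+1}=b\sqrt{y_{l+1}}$, and $z_i=\sqrt{y_i}$ for $i\ge l+2$; in particular $z_i^2=c_i$ in every case. I would then invoke the standard formula for the characteristic polynomial of an arrowhead matrix — obtained from the Schur complement of the diagonal block $tI-\mathrm{diag}(y_1,\dots,y_{g-1})$ and extended to all $t$ by polynomial continuity — which reads $(t-y^2)\prod_{i=1}^{g-1}(t-y_i)-\sum_{i=1}^{g-1}z_i^2\prod_{j\ne i}(t-y_j)$. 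Substituting $z_i^2=c_i$ and splitting $(t-y^2)\prod_{i}(t-y_i)=t\prod_{i}(t-y_i)-y^2\prod_{i}(t-y_i)$ gives exactly the bracketed expression in the statement, and multiplication by $t^a$ yields $p(t,y,\mathbf{y})$.

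The main obstacle I expect is bookkeeping rather than anything conceptual: one must track the normalisation factors $\sqrt{y_i}$ introduced by the orthonormal basis so that the effective coupling squares reproduce the coefficients $c_i$ precisely — in particular the two special cases $c_{l+1}=y_{l+1}b^2$ and $c_i=y_i$ for $i\ge l+2$ — and one must check that the complement directions are genuinely decoupled, with no residual coupling between distinct vertical blocks. An alternative that sidesteps the square roots entirely is to compute $\det(tI-XX^\top)$ directly by elementary row and column operations within each all-ones block (subtracting the first row and column of the block from the remaining ones), which clears the coupling on the complement directions and triangularises them; I would keep this as a fallback for readers who prefer to avoid the spectral language.
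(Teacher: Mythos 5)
Your proof is correct, but it takes a genuinely different route from the paper's. The paper computes $\det(tI-XX^\top)$ head-on by cofactor expansion along the first column: it first records the determinants of the auxiliary matrices $tI_{y_i}-\mathbf{1}_{y_i}$ and $M_{y_i}(t)$, namely $t^{y_i-1}(t-y_i)$ and $-t^{y_i-1}$, and then, for each deleted row $k$, tracks the sign $(-1)^{1+k}$ together with the $k-2$ adjacent row swaps needed to bring the minor to near-block-diagonal form, summing the contributions by row type. Your orthogonal block conjugation replaces that bookkeeping with structure: choosing the first column of each $Q_i$ to be $\tfrac{1}{\sqrt{y_i}}\mathbf{1}_{y_i\times 1}$ turns each rank-one block into $\mathrm{diag}(y_i,0,\dots,0)$, concentrates the constant coupling on that single coordinate with the correct weights (so that $z_i^2=c_i$ in all three cases, including $z_{l+1}^2=b^2y_{l+1}$), splits off the $y_i-1$ kernel directions as the factor $t^a$ --- and your dimension count $a+g=1+l+\sum_{i=l+1}^{g-1}y_i$ does verify that the split is exact --- reducing everything to the standard arrowhead formula via a Schur complement. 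The decoupling you flag as the main point to check does hold, since $XX^\top$ has no nonzero entries outside the first row and column, the upper-left diagonal, and the diagonal blocks, a pattern preserved by block-diagonal conjugation. Your route buys conceptual transparency (the $t^a$ factor is visibly the kernel of the all-ones blocks) and immunity to sign errors, at the cost of a temporary excursion through the irrational entries $\sqrt{y_i}$, which is harmless because the characteristic polynomial is conjugation-invariant and the final coefficients involve only the squares $c_i$; the paper's computation stays elementary and entirely within $\mathbb{Z}[t]$, requiring no spectral language or external formula. Your proposed fallback by row and column operations within each block is closer to the paper in spirit, though the paper's actual mechanism is cofactor expansion rather than row reduction.
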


\begin{proof}
This calculation is slightly tedious, but obtained in a fairly straightforward manner by developing the first 
column of~$(tI - XX^\top)$. We begin by observing that the determinants of the $y_i\times y_i$ matrices
\[
tI_{y_i} -\mathbf{1}_{y_i} = \left(
\begin{array}{cccc}
t-1 & -1 & \cdots & -1  \\
-1 & t-1 &  &  \\
\vdots &  & \ddots &  \\
-1 &  &  & t-1
\end{array}
\right),
\]

\[
M_{y_i}(t) = \left(
\begin{array}{cccc}
-1 & -1 & \cdots & -1  \\
-1 & t-1 &  &  \\
\vdots &  & \ddots &  \\
-1 &  &  & t-1
\end{array}
\right)
\]
are respectively given by the polynomials $t^{y_i-1}(t-y_i)$ and $-t^{y_i-1}$. 
The former calculation follows by inspecting the eigenvalues of the matrix~$\mathbf{1}_{y_i}$, 
and the latter is derived by solving the equation~$$\det(tI_{y_i} -\mathbf{1}_{y_i}) = t\det(tI_{y_i-1} -\mathbf{1}_{y_i-1}) + \det(M_{y_i}(t)).$$
We note that changing the diagonal coefficient~$(-1)$ of the matrix~$M_{y_i}(t)$ with some other diagonal coefficient~$(t-1)$ does not change the determinant. This will be used later on in the calculation.
\smallskip

Now, by developing the first column of~$(tI - XX^\top)$, we get that the characteristic polynomial of~$XX^\top$ has the following summands. 
The first summand (obtained by deleting the first row and the first column when developing) equals
\[
(t-y^2) \prod_{i=1}^l (t-y_i) \prod_{i= l + 1}^{g-1} \det(tI_{y_i} -\mathbf{1}_{y_i}) = t^a(t-y^2)\prod_{i=1}^{g-1}(t-y_i),
\]
where~$a= \sum_{i=l+1}^{g-1}(y_i -1)$. 
The rest of the summands are obtained as follows. Assume that in the development we delete the first column and the~$k$-th row, 
where~$k\ge 2$. We have to take the determinant of the matrix obtained by deleting the~$k$-th row 
of the matrix
$$
\left(
\begin{array}{ccc|cccc}
-b_1 & \cdots & -b_l   & -\mathbf{b}_{1\times y_{l+1}} & -\mathbf{1}_{1\times y_{l+2}} & \cdots & -\mathbf{1}_{1\times y_{g-1}} \\
 t-y_1&    &  & & & \\
 & \ddots   &  & & & \\
 &  & t- y_l   & & & \\
\hline
 & & & tI_{y_{l+1}}-\mathbf{1}_{y_{l+1}} & &\\
  & & & & tI_{y_{l+2}}- \mathbf{1}_{y_{l+2}} & &\\
  & & & && \ddots &\\
  & & & & && tI_{y_{g-1}}-\mathbf{1}_{y_{g-1}}
\end{array}
\right).
$$
After switching adjacent rows (a total of~$k-2$ times) to move the first row to be the $k-1$st one, 
the matrix obtained is almost of block diagonal form and we can read off the determinant. 
For the rows~$k=2, \dots, l+1$, we obtain the summand
\begin{gather*}
 (-b_{k-1})(-1)^{1+k}(-1)^{k-2} \left( \prod_{j \ne {k-1},~1\le j \le l} (t-y_j) \prod_{i= l + 1}^{g-1} \det(tI_{y_i} -\mathbf{1}_{y_i}) \right) (-b_{k-1}) = \\
=  - b_{k-1}^2 t^a \prod_{j\ne k-1} (t-y_j).
\end{gather*}
For the rows~$k> l+y_{l+1}$, we obtain summands of the form
\begin{gather*}
 (-1)(-1)^{1+k}(-1)^{k-2}  \left(\prod_{j=1}^l (t-y_j) \prod_{j \ne i,~l+1 \le j \le g-1}\det(tI_{y_j} -\mathbf{1}_{y_j}) \right) \det (M_{y_i}(t)) = \\
=  - t^a \prod_{j\ne i}(t-y_j).
\end{gather*}
Here, we assume for the calculation that the~$k$-th row intersects the diagonal block~$tI_{y_{i}}- \mathbf{1}_{y_{i}} $, where~$i\ge l+2$.
There are a total of~$y_i$ summands of this type. If the~$k$-th row intersects the block~$tI_{y_{l+1}}- \mathbf{1}_{y_{l+1}} $, 
the corresponding constant vectors of the first row and the first column have coefficients~$b\in\mathbb{R}$. 
In this case, we obtain~$y_{l+1}$ times the summand
$$ - b^2 t^a \prod_{j\ne l+1} (t-y_j).$$
Adding all summands, we finally obtain the polynomial
$$
t^a\left( (t-y^2)\prod_{i=1}^{g-1}(t-y_i) - \sum_{i=1}^{g-1} c_i\prod_{j\ne i}(t-y_j)   \right),
$$
where~$a=\sum_{i=l+1}^{g-1}(y_i-1)$, $c_{l+1}= y_{l+1}b^2$, $c_i=y_i$ for~$i\ge l+2$ and~$c_i=b_i^2$ otherwise. 
\end{proof}

\begin{lem}
\label{irred}
Let~$k\ge 1$, and let~$y_i, c_i \in \mathbb{Z}$ for~$i=1, \dots, k$ such that all~$y_i$ are pairwise distinct and all~$c_i$ are positive.
Then the polynomial
$$p(t,y)=
-y^2\prod_{i=1}^{k}(t-y_i) + t\prod_{i=1}^{k}(t-y_i) - \sum_{i=1}^{k}c_i\prod_{j\ne i}(t-y_j)
$$
is irreducible in~$\mathbb{Z}[t,y]$.
\end{lem}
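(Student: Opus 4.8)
The plan is to treat $p$ as a quadratic polynomial in the single variable $y$ with coefficients in the ring $\mathbb{Z}[t]$. The only term involving $y$ is $-y^2\prod_{i=1}^k(t-y_i)$, so, abbreviating $P(t)=\prod_{i=1}^k(t-y_i)$ and $Q(t)=t\,P(t)-\sum_{i=1}^k c_i\prod_{j\ne i}(t-y_j)$, one has
$$p(t,y)=-P(t)\,y^2+Q(t),$$
a polynomial of degree exactly two in $y$ (since $P\neq 0$) with vanishing linear coefficient. As $\mathbb{Z}[t]$ is a UFD with fraction field $\mathbb{Q}(t)$, Gauss's lemma reduces the claim to two statements: that $p$ is primitive as a polynomial in $y$ over $\mathbb{Z}[t]$, and that $p$ is irreducible in $\mathbb{Q}(t)[y]$.

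For primitivity, the first step is to show that $P$ and $Q$ are coprime in $\mathbb{Q}[t]$. The roots of $P$ are precisely the pairwise distinct $y_1,\dots,y_k$, and evaluating $Q$ at $t=y_m$ annihilates every summand except the one with $i=m$, giving $Q(y_m)=-c_m\prod_{j\ne m}(y_m-y_j)$. This is nonzero because the $y_j$ are distinct and $c_m\neq 0$ (only the nonvanishing of the $c_i$, a consequence of their positivity, is needed here). Hence $P$ and $Q$ share no root, so $\gcd(P,Q)=1$ in $\mathbb{Q}[t]$; since $P$ is monic, any common factor in $\mathbb{Z}[t]$ must be a constant dividing the leading coefficient of $P$, hence a unit. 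Therefore $\gcd_{\mathbb{Z}[t]}(P,Q)=1$, and the content of $p$ (the gcd of its coefficients $-P$, $0$, $Q$) is a unit, so $p$ is primitive.

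For irreducibility over $\mathbb{Q}(t)$, I would use that a quadratic in $y$ is irreducible exactly when it has no root in the base field, i.e. exactly when $Q(t)/P(t)$ is not a square in $\mathbb{Q}(t)$. Here a valuation argument settles the matter: because the $y_i$ are distinct, $P$ is square-free, so each linear factor $(t-y_i)$ divides $P$ to order exactly one; since $(t-y_i)$ does not divide $Q$ by the computation above, the valuation of $Q/P$ at $t-y_i$ equals $-1$, which is odd. As any square in $\mathbb{Q}(t)$ must have even valuation at every irreducible, $Q/P$ is not a square, and $p$ is irreducible in $\mathbb{Q}(t)[y]$. Combining this with primitivity and Gauss's lemma yields irreducibility of $p$ in $\mathbb{Z}[t,y]$.

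I expect the only real care required to be the bookkeeping in passing between $\mathbb{Z}[t,y]$, $\mathbb{Q}[t,y]$ and $\mathbb{Q}(t)[y]$, rather than a genuine obstacle: the mathematical heart is the single observation that a simple root of $P$ forces an odd valuation in $Q/P$, which rules out the only conceivable factorisation of the $y$-quadratic $p$.
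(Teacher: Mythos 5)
Your proof is correct, and while it shares the paper's overall framing---viewing $p$ as a quadratic in $y$ over $\mathbb{Z}[t]$ and using the evaluation $Q(y_m)=-c_m\prod_{j\ne m}(y_m-y_j)\neq 0$ to show the extreme coefficients are coprime, which is exactly how the paper rules out factors of degree zero in $y$---it diverges at the decisive step. The paper excludes a splitting into two factors linear in $y$ via Eisenstein's criterion in $\left(\mathbb{Z}[t]\right)[y]$: it realises the constant coefficient $p(t,0)$ as the characteristic polynomial of the symmetric matrix of Definition~\ref{def:XX} with $y=0$ and $b_i=\sqrt{c_i}$, invokes the Perron--Frobenius theorem to produce a simple root, and takes the irreducible factor $q(t)$ through that root as the Eisenstein prime. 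You instead pass to $\mathbb{Q}(t)[y]$, note that the quadratic $-Py^2+Q$ is reducible there exactly when $Q/P$ is a square in $\mathbb{Q}(t)$, and kill this with the odd valuation $-1$ of $Q/P$ at each $t-y_i$, using that $P$ is squarefree because the $y_i$ are pairwise distinct; Gauss's lemma then finishes as in the paper. Your route is more elementary---no spectral input at all---and, as you observe, needs only $c_i\neq 0$ rather than positivity, whereas the paper's argument genuinely uses $c_i>0$ (real entries $\sqrt{c_i}$ and irreducibility of the nonnegative matrix are what make the Perron--Frobenius eigenvalue simple). What the paper's ``simple root of the constant coefficient gives an Eisenstein prime'' template buys is reusability: the same device is deployed later for the spin-$0$ component of $\mathcal{H}(2g-2)$ and in Lemma~\ref{charpoly:XXt:hyp}, where the constant coefficient is no longer given in the explicit product form, the root locations are unknown, and simplicity of eigenvalues (via Perron--Frobenius or oscillatory-matrix interlacing) substitutes for the explicit valuation computation your argument relies on.
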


\begin{proof}
We regard the polynomial~$p(t,y) \in \mathbb{Z}[t,y] \cong \left(\mathbb{Z}[t]\right)[y]$ as a polynomial of degree two in the variable~$y$, 
with coefficients in~$\mathbb{Z}[t]$. We note that the coefficient of~$y^2$ 
and the constant coefficient~$p(t,0)$ are relatively prime in~$\mathbb{Z}[t]$. 
This follows from the observation that the roots of the coefficient of~$y^2$ are exactly the~$y_i$, 
while none of those numbers is a root of the constant coefficient. Indeed, we have
$$p(y_i,0) = -c_i\prod_{j\ne i}(y_i-y_j) \ne 0.$$
 This implies that the only possibility to factor~$p(t,y)$ is by 
writing it as a product of two factors linear in the variable~$y$. 
To rule this out, we apply Eisenstein's criterion as follows.
The constant coefficient~$p(t,0)$ has a simple root: the Perron-Frobenius eigenvalue of a matrix of the form~$XX^\top$ as in 
Definition~\ref{def:XX}, where we set~$l = g-1 = k$,~$b_i = \sqrt{c_i}$ and~$y=0$. 
Let~$q(t)\in\mathbb{Z}[t]$ be the irreducible factor of~$p(t,0)\in\mathbb{Z}[t]$ containing this root. 
Then~$q(t)$ divides the constant coefficient~$p(t,0)$ but~$q(t)^2$ does not. Furthermore,~$q(t)$ does not divide the 
coefficient of~$y^2$ since otherwise it would have a root in common with the constant coefficient~$p(t,0)$. 
Eisenstein's criterion now implies that~$p(t,y)$ can not be factored into a product of two factors with positive degree in the variable~$y$. 
\end{proof}

\subsection{Main results for strata}
We are now ready to prove the analogues of Theorem~\ref{thm:trace} and Theorem~\ref{thm:stretch}
for strata of Abelian differentials. 

\begin{thm}
\label{thm:all:strata}
Every number~$1\le d\le g$ is realised as the degree of the trace field of a product of two affine multitwists on a surface in every stratum of Abelian differentials on Riemann surfaces of genus~$g$. 
\end{thm}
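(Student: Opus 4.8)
The plan is to combine the square-tiled surfaces of~\S\ref{sec:construct}, the characteristic polynomial of Lemma~\ref{charpoly}, the irreducibility of Lemma~\ref{irred}, and Hilbert's irreducibility theorem, treating the arithmetic case $d=1$ separately. Fix the stratum $\mathcal{H}(k_1,\dots,k_m)$. Its construction in~\S\ref{sec:construct} produces, for every positive integer choice of the parameters $y_1,\dots,y_{g-1}$ (the first $l$ attached to L-shapes, the remaining ones to vertical strips) and $y$, a square-tiled surface in $\mathcal{H}(k_1,\dots,k_m)$: the sizes of the inserted pieces only change the combinatorics of the intersection pattern, not the cone angles, so they do not affect the stratum. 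The core curves $\alpha$ of the horizontal and $\beta$ of the vertical annuli fill, intersect minimally, and their intersection matrix $X$ satisfies $XX^\top$ as in Definition~\ref{def:XX} with $b=b_i=1$. By the discussion preceding Theorem~\ref{thm:criterion}, the trace field of the Thurston--Veech pseudo-Anosov $T_\alpha\circ T_\beta$ equals $\mathbb{Q}(\mu^2)$, where $\mu^2$ is the Perron--Frobenius eigenvalue of $XX^\top$. Hence it suffices to arrange that $\mu^2$ has degree exactly $d$ over $\mathbb{Q}$ while keeping $\mu^2>4$, the latter ensuring that $T_\alpha\circ T_\beta$ is pseudo-Anosov.

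For $2\le d\le g$, I would first specialise the $y_i$ so that they take exactly $d-1$ distinct integer values, each of them at least $2$ (so that every L-shape and every vertical strip remains a legitimate tile pattern) and each used at least once; this is possible since $1\le d-1\le g-1$. Concretely one may set $g-d+1$ of them equal to $2$ and the remaining $d-2$ equal to $3,4,\dots,d$. I then claim that, inside the polynomial $p(t,y,\mathbf{y})$ of Lemma~\ref{charpoly}, each value $v$ occurring with multiplicity $m_v$ among the $y_i$ splits off a factor $(t-v)^{m_v-1}$, and that (after also removing the factor $t^a$) what survives is a polynomial of the shape
\[
\tilde p(t,y)=-y^2\prod_{r=1}^{d-1}(t-v_r)+t\prod_{r=1}^{d-1}(t-v_r)-\sum_{r=1}^{d-1}c_r'\prod_{\rho\neq r}(t-v_\rho),
\]
where $v_1,\dots,v_{d-1}$ are the distinct values and $c_r'$ is the sum of the corresponding positive coefficients from Lemma~\ref{charpoly}, so that $c_r'>0$. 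A degree count ($\deg_t p=g$ versus $\deg_t\prod_v(t-v)^{m_v-1}=g-d$) confirms that $\tilde p$ has degree $d$ in $t$; and this is exactly the normal form of Lemma~\ref{irred} with $k=d-1$, whence $\tilde p(t,y)$ is irreducible in $\mathbb{Z}[t,y]$ and monic of degree $d$ in $t$.

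Since $\tilde p$ is monic in $t$ and irreducible in $\mathbb{Z}[t,y]$, it is irreducible in $\mathbb{Q}(y)[t]$ by Gauss's lemma, so Hilbert's irreducibility theorem furnishes infinitely many integers $y_0$ for which $\tilde p(t,y_0)\in\mathbb{Z}[t]$ is irreducible of degree $d$. Choosing $y_0$ large forces the Perron--Frobenius eigenvalue of $XX^\top$, which is asymptotic to $y_0^2$, to exceed $4$ and to be a root of $\tilde p(t,y_0)$ rather than one of the small factored-off roots $v_r$ or $0$; as $\tilde p(t,y_0)$ is irreducible of degree $d\ge 2$ none of its roots is one of the rational $v_r$, so there is no overlap. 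Therefore $T_\alpha\circ T_\beta$ is pseudo-Anosov and its trace field $\mathbb{Q}(\mu^2)$ has degree exactly $d$.

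Finally, for $d=1$ the target trace field is $\mathbb{Q}$, the arithmetic case. Here I would use that the square-tiled surfaces produced in~\S\ref{sec:construct} are arithmetic, so that any pseudo-Anosov obtained as a product of their horizontal and vertical affine multitwists has trace field $\mathbb{Q}$; taking suitable powers of the two multitwists (equivalently, large enough parameters) makes the product hyperbolic and hence pseudo-Anosov, completing this case. The step I expect to be the most delicate is the bookkeeping of the factorisation: one must verify carefully that reducing to $d-1$ distinct parameter values collapses $p(t,y,\mathbf{y})$ precisely into the Lemma~\ref{irred} normal form with strictly positive coefficients, and simultaneously that the surviving degree-$d$ factor is the one carrying the Perron--Frobenius eigenvalue rather than a spurious rational factor. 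The compatibility with the size constraints of the inserted pieces---forcing all parameter values to be at least $2$---is exactly what makes the specialisation to distinct values $\ge 2$ preferable to literally setting parameters equal to $1$.
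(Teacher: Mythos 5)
Your proposal is correct and follows essentially the same route as the paper: the surface of \S\ref{sec:construct}, collapsing repeated parameter values in the characteristic polynomial of Lemma~\ref{charpoly} into the normal form of Lemma~\ref{irred} (with summed positive coefficients), then Hilbert's irreducibility theorem, with $d=1$ handled by the arithmetic square-tiled case. The only deviation is that you specialise the repeated parameters to the value $2$ rather than $1$ --- the paper explicitly allows $y_i\ge 1$, so its choice is also legitimate, and this difference is immaterial for the trace-field statement (though the paper's value $1$ is what later produces the factor $(t-1)^{g-d}$ exploited in the signature bound of Theorem~\ref{thm:all:strata:stretch}).
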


\begin{proof}
Let~$\mathcal{H}(k_1, k_2, \dots, k_m)$ be a stratum of Abelian differentials. We use the surface constructed in \S\ref{sec:construct}. 
By Thurston--Veech's construction~\cite{Th, Veech:construction}, there exists a flat structure on it, obtained by changing the side lengths of the rectangles, 
such that the multitwists~$T_\alpha$ and~$T_\beta$ have affine representatives, and such that the degree of the trace field is given by the degree of the Perron-Frobenius eigenvalue~$\mu^2$ of~$XX^\top$.

Let~$2 \le d \le g$ be the some degree of a trace field we want to construct. Set~$g-d+1$ of the~$g-1$ parameters~$y_i$ equal to~$1$ and 
all others~$\ne 1$ and pairwise distinct. In this way, the characteristic polynomial of~$XX^\top$ can be factored as~$(t-1)^{g-d}p(t,y)$, where the 
polynomial~$p(t,y)$ is of degree~$d$ in the variable~$t$ and with pairwise distinct~$y_i$. In particular, Lemma~\ref{irred} implies 
that~$p(t,y)$ is irreducible as a polynomial in~$\mathbb{Z}[t,y]$. Now, by Hilbert's irreducibility theorem~\cite{Lang}, there are infinitely many integer specifications of~$y$ such that the resulting polynomial is irreducible in~$\mathbb{Z}[t]$. 
For~$|y|$ large enough, all these specifications can be realised geometrically as in \S\ref{sec:construct}, 
since we start with~$y^2 -g + 1$ squares in the construction. In particular, for every such~$y$, 
we obtain an Abelian differential with trace field of degree~$d$. 
\end{proof}

\begin{thm}
\label{thm:all:strata:stretch}
Every even number~$2\le 2d \le 2g$ is realised as the degree of a product of two affine multitwists on a surface 
in every stratum of Abelian differentials on Riemann surfaces of genus~$g$.
\end{thm}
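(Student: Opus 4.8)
The plan is to reuse \emph{verbatim} the surface and the parameter choices from the proof of Theorem~\ref{thm:all:strata}, and then to promote the statement about the trace field degree~$d$ to one about the stretch factor degree~$2d$ by feeding these data into the nonsplitting criterion of Theorem~\ref{thm:criterion}. Concretely, fix~$2\le d\le g$. The construction of~\S\ref{sec:construct}, with~$g-d+1$ of the parameters~$y_i$ set equal to~$1$, the remaining ones pairwise distinct, and~$y$ chosen large so that (via Hilbert's irreducibility theorem) the degree~$d$ factor~$p(t,y)$ is irreducible, produces a surface in the prescribed stratum on which the Perron--Frobenius eigenvalue~$\mu^2$ of~$XX^\top$ has degree~$d$. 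It then remains to verify the two inequalities
\[
n+m>\sigma(\Omega+2I)+\mathrm{null}(\Omega+2I)>n+m-2d
\]
demanded by Theorem~\ref{thm:criterion}, after which that theorem immediately yields a pseudo-Anosov~$T_\alpha\circ T_\beta$ whose stretch factor~$\lambda$ has degree exactly~$2d$.

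By part~(\ref{itemone}) of Lemma~\ref{dissprops} the middle quantity counts the eigenvalues of~$\Omega$ in~$[-2,2]$, so both inequalities are statements about the number~$N$ of eigenvalues of~$\Omega$ lying \emph{outside}~$[-2,2]$: the left one reads~$N>0$ and the right one reads~$N<2d$. Since~$\Omega^2=\left(\begin{smallmatrix}XX^\top & 0\\ 0 & X^\top X\end{smallmatrix}\right)$, the eigenvalues of~$\Omega$ are the numbers~$\pm s_i$ with~$s_i^2$ ranging over the nonzero eigenvalues of~$XX^\top$ (together with some zeros); hence~$N=2\cdot\#\{\text{eigenvalues of }XX^\top\text{ that exceed }4\}$ is even. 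The left inequality is then immediate: for~$|y|$ large the Perron--Frobenius eigenvalue satisfies~$\mu^2\ge y^2>4$, so~$N\ge 2>0$, and the map is pseudo-Anosov exactly as in the first part of the proof of Theorem~\ref{thm:criterion}.

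The heart of the matter is the right inequality, equivalently the claim that \emph{at most~$d-1$ eigenvalues of~$XX^\top$ exceed~$4$}; this is the step I expect to be the main obstacle. By Lemma~\ref{charpoly} together with the factorisation used in Theorem~\ref{thm:all:strata}, the characteristic polynomial of~$XX^\top$ is~$t^a(t-1)^{g-d}p(t,y)$, so its eigenvalues are~$a$ zeros, the eigenvalue~$1$ with multiplicity~$g-d$, and the~$d$ roots of~$p(t,y)$; the zeros and ones lie below~$4$, so everything reduces to the roots of~$p$. To control these I would analyse~$XX^\top=\left(\begin{smallmatrix}y^2 & v^\top\\ v & A\end{smallmatrix}\right)$ as~$y\to\infty$, where~$A$ is the fixed block-diagonal matrix obtained by deleting the first row and column. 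A standard perturbation (dominant-term) argument shows that one eigenvalue escapes to~$+\infty$ while the others converge to the eigenvalues of~$A$, namely the~$y_i$ together with further zeros coming from the all-ones blocks. Comparing this limit with the factorisation~$t^a(t-1)^{g-d}p$ reveals that, among the~$d$ roots of~$p$, exactly one converges to~$1$, the~$d-2$ remaining finite ones converge to the distinct non-unit values~$y_i$, and the last escapes to~$+\infty$. In particular, for~$|y|$ large the root of~$p$ near~$1$ is~$<4$, so at most~$d-1$ roots of~$p$ --- hence at most~$d-1$ eigenvalues of~$XX^\top$ --- exceed~$4$. This gives~$N\le 2(d-1)<2d$, as required.

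Since infinitely many of the Hilbert specialisations of~$y$ may be taken arbitrarily large, both inequalities hold simultaneously, and Theorem~\ref{thm:criterion} delivers a stretch factor of degree~$2d$ for every~$2\le d\le g$. The one remaining value,~$2d=2$ (that is~$d=1$), falls under the arithmetic, square-tiled case: there one arranges~$\mu^2\in\mathbb{Q}$, so that~$\mathbb{Q}(\lambda+\lambda^{-1})=\mathbb{Q}$, and the extension~$\mathbb{Q}(\lambda):\mathbb{Q}(\lambda+\lambda^{-1})$ is automatically of degree two; any pseudo-Anosov produced by Thurston--Veech's construction then has stretch factor of degree~$2$. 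This settles every even value~$2\le 2d\le 2g$ and completes the proof.
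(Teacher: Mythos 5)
Your proposal is correct, and it shares the paper's skeleton --- the same surfaces from \S\ref{sec:construct} with $g-d+1$ parameters $y_i$ set to $1$, the same Hilbert specialisation of $y$, and the same nonsplitting criterion (Theorem~\ref{thm:criterion}) --- but you verify the two inequalities of that criterion by a genuinely different method. The paper works directly with $\Omega$: it deletes the $d-1$ rows and columns corresponding to $y$ and the $d-2$ non-unit parameters $y_i$, observes that the remaining matrix $\Omega'$ is the adjacency matrix of a forest of path graphs, so that $\Omega'+2I$ is positive definite, and concludes $\sigma(\Omega+2I)\ge \sigma(\Omega'+2I)-(d-1)=n+m-2d+2$ by eigenvalue interlacing; the left inequality comes from a negative eigenvalue of $\Omega+2I$ once $y>4$. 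You instead translate both inequalities, via Lemma~\ref{dissprops}(\ref{itemone}) and the $\pm$-symmetry of the nonzero spectrum of $\Omega$, into the statement that the number of eigenvalues of $XX^\top$ strictly exceeding $4$ lies between $1$ and $d-1$, and you prove the upper bound asymptotically: as $y\to\infty$, $p(t,y)/(-y^2)$ converges coefficientwise to $(t-1)\prod_{y_i\ne 1}(t-y_i)$, so exactly one root of $p$ tends to $1$ (hence is eventually below $4$), the others staying near the non-unit $y_i$ or escaping to infinity. This limit claim is exactly what Lemma~\ref{charpoly} yields after factoring out $(t-1)^{g-d}$, and taking $y$ large is harmless since Hilbert's theorem supplies infinitely many admissible specialisations; still, at the crucial point your argument is a sketch, and to make it airtight you should record the standard root-continuity statement you invoke. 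What each approach buys: the paper's signature argument is uniform in $y$ and purely combinatorial, which is why it transfers word for word to the modified examples of \S\ref{sec:cc}; your spectral count localises the eigenvalues of $XX^\top$ explicitly, and could even be made uniform in $y$ by a single application of Cauchy interlacing to $XX^\top$ (deleting its first row and column leaves a block whose eigenvalues above $4$ are among the $d-2$ non-unit $y_i$, so at most $d-1$ eigenvalues of $XX^\top$ exceed $4$ for every $y$). Finally, your explicit treatment of $d=1$ via the square-tiled case fills in a step the paper leaves implicit, as its written proof only covers $2\le d\le g$.
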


\begin{proof}
In the proof of Theorem~\ref{thm:all:strata}, we have constructed examples with Perron-Frobenius eigenvalue~$\mu^2$ of~$XX^\top$ 
having degree~$d$ by letting~$g-d+1$ parameters~$y_i$ equal to~$1$. For these examples, we now bound~$\sigma(2I + \Omega)$ 
in order to apply Theorem~\ref{thm:criterion} to~$T_\alpha \circ T_\beta$. Let~$\Omega'$ be the matrix obtained from~$\Omega$ by deleting 
all the rows and all the columns corresponding to~$y$ or the~$g-1 - (g-d+1) = d-2$ parameters~$y_i$ that are not set equal to~$1$. 
We have~$$\sigma(\Omega + 2I) \ge \sigma(\Omega' + 2I) - (d-1).$$ By construction,~$\Omega'$ is the adjacency matrix of a forest 
consisting of path graphs (some of which might be of length zero). In particular, one directly verifies that~$2I + \Omega'$ is positive definite.  
We get~$$\sigma(\Omega + 2I) \ge \sigma(\Omega' + 2I) - (d-1) = n+m - 2d+2 > n+ m - 2d.$$
Furthermore, one directly checks that the matrix~$\sigma(\Omega + 2I)$ has a negative eigenvalue as soon as~$y>4$, which we are allowed to assume. 
This implies~$$n+m>\sigma(\Omega + 2I) + \mathrm{null}(\Omega + 2I).$$ 
Theorem~\ref{thm:criterion} applies and the mapping class~$T_\alpha \circ T_\beta$ is pseudo-Anosov with stretch factor~$\lambda$ of degree~$2d$.
\end{proof}

\begin{remark}
\emph{
The mapping classes we construct above are positive arborescent and so can all obtained by capping off monodromies of certain fibred links called positive arborescent Hopf plumbings. This relation is discussed for example in the background chapter of the second authors' thesis~\cite{Ldiss}. The pseudo-Anosov stretch factors therefore appear as the dominating roots of the Alexander polynomials of these links. It is conceivable that our argument, or at least a portion thereof, could be replaced by a careful analysis of these Alexander polynomials using the skein relation. However, the calculations we present here can readily be applied to our examples in~\S\ref{sec:cc}, which are not necessarily obtained from arborescent Hopf plumbings anymore.
}\end{remark}

\section{Connected components of strata}
\label{sec:cc}

In this section, we study the connected components of strata of Abelian differentials. 
After recalling the classification of the connected components, we first analyse to which 
connected components our examples from \S\ref{sec:stratum} belong. We then 
construct examples covering all remaining connected components, finally proving 
Theorem~\ref{thm:trace} and Theorem~\ref{thm:stretch} in full generality.

\subsection{Classification of connected components of strata}
\label{subsec:cc}

The connected components of the strata of the moduli space of Abelian differentials are classified by~\cite{KZ}. There are at most three connected components, and the classification uses two topological invariants that we describe now.
\begin{enumerate}
\item {\em Hyperellipticity}. For $g\geq 2$, the strata $\mathcal H(2g-2)$ and $\mathcal H(g-1,g-1)$ have a component that consists entirely of hyperelliptic Riemann surfaces, where the hyperelliptic involution permutes the two zeros (when there are two).
\item {\em Parity of the spin structure}. If the degrees of the singularities of a stratum are all even, then one can define a spin structure, or equivalently a quadratic form $q$ on the first homology group. The parity of this spin structure (or the Arf invariant of the form) is a topological invariant.
\end{enumerate}
\begin{remark}
\label{rk:not:hyp}
\emph{
If a translation surface belongs to a hyperelliptic component $\mathcal H^{\mathrm hyp}(2g-2)$ or $\mathcal H^{\mathrm hyp}(g-1,g-1)$ and admits a cylinder decomposition, then all cylinders are fixed by the hyperelliptic involution, and each of them contains exactly two fixed points in its interior. Since the total number of fixed points is $2g+2$, this observation can be used to show that a translation surface does not belong to a hyperelliptic component.}
\end{remark}

We will use the topological definition of the spin structure (see~\cite[\S 3.1]{KZ} for details) to have an effective way to compute its parity in terms of the Arf invariant of $q$. Since the flat metric $(X,\omega)$ has trivial holonomy, outside  of  finite  number  of  singularities, we  have  a  well-defined  horizontal  direction.  Consider  a  smooth  simple closed  oriented  curve~$\alpha$ on~$X$ which does not contain  any singularities. The  total  change  of  the  angle  between  the tangent vector to $\alpha$ and  the  tangent vector to  the  horizontal is  equal  to $2\pi \cdot \mathrm{ind}(\alpha)$, where $\mathrm{ind}(\alpha)\in\mathbb Z$. 
Choose any symplectic basis $(a_i,b_i)_{i=1,\dots,g}$ of $H_1(X;\mathbb Z/2)$. Then the parity of the spin structure is~\cite[Equation~(4)]{KZ}:
\begin{equation}
\label{eq:spin}
\Phi(\omega) = \sum_{i=1}^g q(a_i)q(b_i) \mod 2,
\end{equation}
where $q(\alpha)=\mathrm{ind}(\alpha)+1$ for an oriented smooth path $\alpha$. Together with the formula $q(\alpha+\beta)=q(\alpha)+q(\beta)+i(\alpha,\beta)$ for any $\alpha,\beta\in H_1(X;\mathbb Z/2)$, it is easy to calculate the parity of the spin structure given in any (non symplectic) basis of the first homology.

Next we explain concretely how to compute $\Phi(\omega)$, where $(X,\omega)$ is obtained from the construction in Section~\ref{sec:stratum}.
Observe that $(X,\omega)$  belongs to a non hyperelliptic component if $g>2$. To see this, when $(X,\omega)\in \mathcal H^{hyp}(2g-2)$, note that the number of cylinders we have inserted is $g-1$. By Remark~\ref{rk:not:hyp} they contribute to $2g-2$ fixed points of the hyperelliptic involution (located on the $2g-2$ horizontal core curves), say $p_1,p'_1,\dots,p_{g-1},p'_{g-1}$. There are two more fixed points $q,q'$ on the horizontal core curve of the long cylinder $\mathcal C$ we start with, and one fixed point on its boundary, say $q''$, that is on the same vertical closed curve as $q'$. The last fixed point is the singularity. On the other hand, each inserted cylinder should have two fixed points on its {\em vertical} core curve: one is $p_i$, the other one is $p''_i \in \mathcal C$. Thus necessarily $p''_i=q$ for all $i=1,\dots,g-1$. This is possible only if $g-1=1$.
For $(X,\omega)\in \mathcal H^{hyp}(g-1,g-1)$ the situation is similar.

\subsection{Non hyperelliptic components, spin $1$}
Consider $(X,\omega)$ obtained from the construction in \S\ref{sec:stratum} when all~$k_i$ are even.
As a basis of the first homology $H_1(X,\mathbb Z/2)$, we take horizontal curves~$\gamma_0,\dots,\gamma_{g-1}$ ($\gamma_0$ is the horizontal curve that we start with, 
and~$\gamma_i$ is in the $i$th vertical cylinder), and vertical curves~$\eta_0,\dots,\eta_{g-1}$ ($\eta_0$ crosses $\gamma_0$ only once, and~$\eta_i$ is 
the core curve of the $i$th vertical cylinder for~$i>0$). By construction, for every~$i,j$
$$
i(\gamma_0,\eta_j)=1, \ i(\gamma_i,\eta_j)=\delta_{ij} \textrm{ for } i> 0, \textrm{ and } i(\gamma_i,\gamma_j)=i(\eta_i,\eta_j)=0.
$$
We can thus form a symplectic basis as follows:
$$
\left\{\begin{array}{ll}
a_1 = \gamma_0, & b_1 = \eta_0 \\
a_i = \gamma_{i-1}, &  b_i = \eta_{i-1} - \eta_0 \textrm{ for } i\neq 0
\end{array}
\right.
$$
Clearly $\mathrm{ind}(\gamma_i)=\mathrm{ind}(\eta_i)=0$.
Substituting in Equation~\eqref{eq:spin}, we conclude:
\begin{multline*}
\Phi(\omega) = 1 + \sum_{i=2}^g q(\gamma_{i-1})q(\eta_{i-1}-\eta_0)=\\
=1 + \sum_{i=2}^g (q(\eta_{i-1})+q(\eta_0)+i(\eta_0,\eta_{i-1})) \equiv 1 \mod 2.
\end{multline*}

\subsection{Non hyperelliptic components $\mathcal H(2k_1,\dots,2k_m)$, spin $0$, $m>1$}
\label{sec:non:hyp:spin0}

We now use a slightly different model defined as follows. Start with the surface depicted in Figure~\ref{fig:non:hyp}, with a long horizontal cylinder made of $y^2-g+1$ squares. It belongs to $\mathcal H^{\mathrm{hyp}}(2,2)$.
Its spin structure is $0$ as we can check directly, or by using the formulae in~\cite[Corollary 5]{KZ}. 
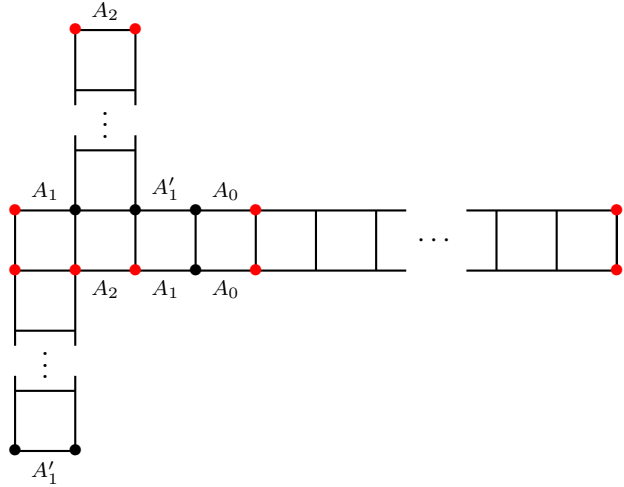
\begin{figure}[htbp]
\begin{tikzpicture}[scale=0.8]
\draw[thick] (0,0) -- (0,1) -- (4,1) -- (6.5,1);   \draw[thick] (7.5,1) -- (10,1)  -- (10,0) -- (7.5,0);
\draw[thick] (0,0) -- (6.5,0);
\draw[thick] (1,1) -- (1,2.25); \draw[thick] (1,2.75)--(1,4)--(2,4)--(2,2.75);
\draw[thick] (2,2.25)--(2,1);
\draw[thick] (1,2)--(2,2);\draw[thick] (1,3)--(2,3);

\draw[thick] (0,0) -- (0,-1.25); 
\draw[thick] (0,-1.75) -- (0,-3) -- (1,-3) -- (1,-1.75);
\draw[thick] (1,-1.25)--(1,0); 
\draw[thick] (0,-1)--(1,-1); \draw[thick] (0,-2)--(1,-2); 

\foreach \i in {1,2,3,4,5,6,8,9,10}
{\draw[thick] (\i,0) -- (\i,1);
}
\foreach \p in {(0,0), (1,0), (2,0),(4,0),(4,1),(1,4),(2,4),(0,1),(10,0),(10,1)}
{
\draw[red] \p node{$\bullet$};
}
\foreach \p in {(1,1), (2,1), (3,1),(3,0),(0,-3),(1,-3)}
{
\draw[black] \p node{$\bullet$};
}
\draw (0.5,-3) node[below]{$\scriptstyle A'_1$}; \draw (2.5,1) node[above]{$\scriptstyle A'_1$};
\draw (0.5,1) node[above]{$\scriptstyle A_1$}; \draw (2.5,0) node[below]{$\scriptstyle A_1$};
\draw (1.5,4) node[above]{$\scriptstyle A_2$}; \draw (1.5,0) node[below]{$\scriptstyle A_2$};
\draw (3.5,1) node[above]{$\scriptstyle A_0$}; \draw (3.5,0) node[below]{$\scriptstyle A_0$};
\draw (0.5,-0.9) node[below]{$\vdots$}; \draw (1.5,3.1) node[below]{$\vdots$};

\draw (7,0.5) node {$\cdots$};
\end{tikzpicture}
\caption{A surface in $\mathcal H^{\mathrm{hyp}}(2,2)$ (with even spin structure).}
\label{fig:non:hyp}
\end{figure}
We can insert~$g-3\geq 1$ vertical strips of~$y_i + 1$ square tiles (for~$i=3,\dots,g-1$) as in Section~\ref{sec:stratum} in order to add zeros of even multiplicities and to reach the stratum $\mathcal H(2k_1,\dots,2k_m)$ where $\sum 2k_i=2g-2$.
This construction does not change the spin structure as we can see on the computation below. We let~$\gamma_0$ the horizontal core curve in the long cylinder, and~$\gamma_1,\dots,\gamma_{g-1}$ the other horizontal core curves contained in the~$i$th cylinder. Similarly, we let~$\eta_i$ for~$i=0,\dots,g-1$ the vertical core curves:~$\eta_0$ is the core curve of the vertical cylinder with label~$A_0$ and~$\eta_i$ is the core curve of the~$i$th vertical cylinder for~$i>0$. We have for every~$i,j$
$$
\begin{array}{l}
i(\gamma_0,\eta_i)=1 \textrm{ for } i \neq 1 \textrm{ and } i(\gamma_0,\eta_1)=2, \\
i(\gamma_i,\eta_i)=\delta_{ij} \textrm{ for } i > 0, \\
i(\gamma_i,\gamma_j)=i(\eta_i,\eta_j)=0.
\end{array}
$$
We can thus form a symplectic basis of $H_1(S;\mathbb Z_2)$ as follows:
$$
\left\{\begin{array}{ll}
a_1 = \gamma_0, & b_1 = \eta_0 \\
a_2 = \gamma_1, & b_2 = \eta_1 \\
a_i = \gamma_{i-1}, &  b_i = \eta_{i-1} - \eta_0 \textrm{ for } i>2
\end{array}
\right.
$$
By using Equation~\eqref{eq:spin} this leads to
\begin{multline*}
\Phi(\omega) = 1 + 1 + \sum_{i=3}^g q(\gamma_{i-1})q(\eta_{i-1}-\eta_0)=\\
=1 + 1 + \sum_{i=3}^g (q(\eta_{i-1})+q(\eta_0)+i(\eta_0,\eta_{i-1})) \equiv 0 \mod 2.
\end{multline*}
We now compute the degree of the trace field.
In order to write down the matrix~$XX^\top$, we apply the strategy described in \S\ref{sec:calculating}. Observe that the horizontal curve that we start with crosses $y^2-g+1+g-3=y^2-2$ squares.
More precisely it intersects $y^2-4$ vertical curves once and one vertical curve twice. 
We obtain the following  matrix, where~$\mathbf{b}_{n\times m}$ stands for the~$n\times m$ matrix with all entries equal to~$b\in \mathbb Z$:
\[
XX^\top=
\left(
\begin{array}{c|cccc}
y^2 & \mathbf{2}_{1\times y_{1}} &  \mathbf{1}_{1\times y_{2}} & \cdots & \mathbf{1}_{1\times y_{g-1}} \\
\hline
\mathbf{2}_{y_{1}\times 1} & \mathbf{1}_{y_1\times y_{1}} & &&\\
\mathbf{1}_{y_{2}\times 1} & & \mathbf{1}_{y_2\times y_{2}}& &\\
\vdots & & & \ddots &\\
\mathbf{1}_{y_{g-1}\times 1} & & & & \mathbf{1}_{y_{g-1}\times y_{g-1}}\\
\end{array}
\right).
\]
From Lemma~\ref{charpoly} with~$l=0$, we see that the characteristic polynomial of~$XX^\top$ equals~$t^a\cdot p(t,y,\mathbf{y}),$ where
$$
p(t,y,\mathbf{y})=
-y^2\prod_{i=1}^{g-1}(t-y_i) + t\prod_{i=1}^{g-1}(t-y_i) - \sum_{i=1}^{g-1} c_i\prod_{j\ne i}(t-y_j),
$$
for~$a=\sum_{i=1}^{g-1}(y_i-1)$, $c_{1}= 4y_{1}$ and $c_i=y_i$ if~$i\ge 2$.
From Lemma~\ref{irred}, we deduce that~$p(t,y,\mathbf{y})$ is irreducible in~$\mathbb{Z}[t,y]$ given that all~$y_i\in \mathbb{N}$ are pairwise distinct (here our parameter $b$ in Definition~\ref{def:XX} equals~$2$). 
As before, we can factor out~$(t-1)^{g-d}$ and obtain an irreducible polynomial of degree~$d$ by setting~$g-d+1$ of the~$g-1$ parameters~$y_i$ equal to~$1$. 
We can then apply the same strategy than the proof of Theorem~\ref{thm:all:strata} to get the result.
\begin{cor}
Every number~$1\le d\le g$ is realised as the degree of the trace field of a product of two affine multitwists on a surface in every non hyperelliptic connected component with spin $0$ of a stratum of Abelian differentials on Riemann surfaces of genus~$g$. 
\end{cor}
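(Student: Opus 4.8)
The plan is to reproduce the strategy of Theorem~\ref{thm:all:strata}, but starting from a base surface of \emph{even} spin structure so that the entire family of examples lands in the prescribed non-hyperelliptic spin~$0$ component. Concretely, I would begin with the surface in $\mathcal{H}^{\mathrm{hyp}}(2,2)$ depicted in Figure~\ref{fig:non:hyp}, whose long horizontal cylinder consists of $y^2-g+1$ squares and whose spin structure is even, and then insert $g-3$ vertical strips of $y_i+1$ tiles (for $i=3,\dots,g-1$) exactly as in \S\ref{sec:stratum}, so as to add zeros of even multiplicity and reach an arbitrary stratum $\mathcal{H}(2k_1,\dots,2k_m)$ with $m>1$ and $\sum 2k_i=2g-2$.

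Next I would verify membership in the target component. Non-hyperellipticity for $g>2$ follows from Remark~\ref{rk:not:hyp}, by counting the fixed points of a putative hyperelliptic involution against the cylinder decomposition. For the spin invariant, I would exhibit the explicit symplectic basis $(a_i,b_i)$ assembled from the horizontal core curves $\gamma_i$ and the vertical core curves $\eta_i$, record the intersection data $i(\gamma_0,\eta_1)=2$ and $i(\gamma_0,\eta_i)=1$ for $i\neq 1$, and substitute into Equation~\eqref{eq:spin}. The resulting computation $\Phi(\omega)\equiv 0 \pmod 2$ simultaneously shows that the base surface has even spin and that each inserted strip contributes an even increment to the Arf invariant, so the whole family remains in the spin~$0$ component.

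Then I would control the trace field degree. Writing the intersection matrix $XX^\top$ for these horizontal and vertical core curves, the only new feature compared with \S\ref{sec:calculating} is that the starting horizontal curve meets one vertical curve twice; this is encoded by setting the parameter $b=2$ in Definition~\ref{def:XX}. Applying Lemma~\ref{charpoly} with $l=0$ yields the characteristic polynomial $t^a\,p(t,y,\mathbf{y})$ with $c_1=4y_1$ and $c_i=y_i$ for $i\ge 2$, all of which are positive integers, so Lemma~\ref{irred} applies and $p(t,y,\mathbf{y})$ is irreducible in $\mathbb{Z}[t,y]$ as soon as the $y_i$ are pairwise distinct. Setting $g-d+1$ of the parameters equal to $1$ factors out $(t-1)^{g-d}$ and leaves an irreducible factor of degree $d$ in $t$; Hilbert's irreducibility theorem then produces infinitely many integer specializations of $y$ for which the trace field has degree exactly $d$, all geometrically realizable for $|y|$ large, precisely as in Theorem~\ref{thm:all:strata}.

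The main obstacle I anticipate is the spin computation rather than the degree argument. One must select the base surface so that its even parity can be read off cleanly from a convenient symplectic basis, and then check that \emph{every} admissible insertion of vertical strips preserves the parity, so that one genuinely reaches each stratum $\mathcal{H}(2k_1,\dots,2k_m)$ with $m>1$ while staying in the spin~$0$ component. Once this invariance is established and non-hyperellipticity is confirmed via Remark~\ref{rk:not:hyp}, the remainder of the proof is identical to the per-stratum case treated in Theorem~\ref{thm:all:strata}.
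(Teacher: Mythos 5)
Your proposal is correct and follows the paper's own proof essentially step for step: the same base surface in $\mathcal H^{\mathrm{hyp}}(2,2)$ with even spin (Figure~\ref{fig:non:hyp}), the same insertion of $g-3$ vertical strips to reach $\mathcal H(2k_1,\dots,2k_m)$, the same spin computation via the explicit symplectic basis with $i(\gamma_0,\eta_1)=2$, and the same degree control using Lemma~\ref{charpoly} with $b=2$ (so $c_1=4y_1$, $c_i=y_i$ for $i\ge 2$), Lemma~\ref{irred}, the factor $(t-1)^{g-d}$, and Hilbert's irreducibility theorem as in Theorem~\ref{thm:all:strata}. There is nothing to add or correct.
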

We further apply the same strategy to realise all even degrees as stretch factors. 
One can copy the proof of Theorem~\ref{thm:all:strata:stretch} word for word and obtain the following result.
\begin{cor}
Every even number~$2\le 2d\le 2g$ is realised as the degree of a product of two affine multitwists on a surface in every non hyperelliptic 
connected component with spin $0$ of a stratum of Abelian differentials on Riemann surfaces of genus~$g$. 
\end{cor}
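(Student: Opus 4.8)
The plan is to run the proof of Theorem~\ref{thm:all:strata:stretch} essentially verbatim, feeding in the spin-$0$ family of \S\ref{sec:non:hyp:spin0} in place of the family of \S\ref{sec:construct}. As in the preceding corollary, I would fix $2\le d\le g$, set $g-d+1$ of the parameters $y_i$ equal to $1$, and keep the remaining $d-2$ parameters pairwise distinct and different from $1$. Then, by Lemma~\ref{irred} together with Hilbert's irreducibility theorem, the Perron--Frobenius eigenvalue $\mu^2$ of the associated matrix $XX^\top$ from \S\ref{sec:non:hyp:spin0} has degree $d$ for infinitely many integer specialisations of $y$, all realisable geometrically once $|y|$ is large. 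It then remains to verify the two inequalities of the nonsplitting criterion Theorem~\ref{thm:criterion} for the mapping class $T_\alpha\circ T_\beta$, namely $n+m>\sigma(\Omega+2I)+\mathrm{null}(\Omega+2I)>n+m-2d$.

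For the lower bound I would form $\Omega'$ exactly as before: delete from $\Omega$ the row and column of the long horizontal curve $\gamma_0$ (associated to the parameter $y$), together with the rows and columns of the $d-2$ vertical core curves $\eta_i$ whose parameter $y_i$ is not set equal to $1$. This is a total of $d-1$ symmetric deletions, so Cauchy interlacing yields $\sigma(\Omega+2I)\ge\sigma(\Omega'+2I)-(d-1)$. The structural point, identical to \S\ref{sec:stratum}, is that $\Omega'$ is the adjacency matrix of a forest of path graphs of length zero or one: deleting $\gamma_0$ isolates the many vertical curves of the long cylinder, deleting each non-unit core $\eta_i$ isolates the horizontal curves of its strip, and each surviving unit strip ($y_i=1$) contributes a single edge. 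Hence $2I+\Omega'$ is positive definite of size $n+m-d+1$, whence $\sigma(\Omega+2I)\ge n+m-d+1-(d-1)=n+m-2d+2>n+m-2d$.

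The one genuine discrepancy with \S\ref{sec:stratum}, and the step I expect to require the most care, is the double intersection of $\gamma_0$ with one vertical core curve in the spin-$0$ model; this is what produces the value $b=2$ in Definition~\ref{def:XX} and the block $\mathbf{2}_{1\times y_1}$ inside $XX^\top$. Fortunately this subtlety dissolves at once: the entry of $X$ recording the double intersection lies in the row of $\gamma_0$, which is precisely a row removed in forming $\Omega'$. Consequently $\Omega'$ contains no entry equal to $2$, its forest-of-paths structure is untouched, and the positive-definiteness argument of the previous paragraph carries over without change.

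Finally, for the upper bound one checks, exactly as in the proof of Theorem~\ref{thm:all:strata:stretch}, that $\Omega+2I$ acquires a negative eigenvalue as soon as $y>4$; by part~(\ref{itemone}) of Lemma~\ref{dissprops} this gives $n+m>\sigma(\Omega+2I)+\mathrm{null}(\Omega+2I)$. Both hypotheses of Theorem~\ref{thm:criterion} are then met, so $T_\alpha\circ T_\beta$ is pseudo-Anosov with stretch factor $\lambda$ of degree $2d$. Since the surfaces of \S\ref{sec:non:hyp:spin0} lie in the prescribed non-hyperelliptic spin-$0$ component, this realises every even degree $2\le 2d\le 2g$ in that component, as claimed.
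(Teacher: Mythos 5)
Your proposal is correct and is exactly the paper's intended argument: the paper proves this corollary by stating that one can copy the proof of Theorem~\ref{thm:all:strata:stretch} word for word, using the spin-$0$ family of \S\ref{sec:non:hyp:spin0}, which is precisely what you do. Your explicit observation that the entry $2$ coming from the double intersection of $\gamma_0$ with $\eta_1$ disappears when forming $\Omega'$ (so the forest-of-paths positive-definiteness argument is unaffected) is a correct and welcome verification of a point the paper leaves implicit.
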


\subsection{Reaching non hyperelliptic component of $\mathcal H(2g-2)$, spin $0$.}

\subsubsection{Degree $d=2$}

We start with the model presented in Figure~\ref{fig:non:hyp}, and insert~$g-3$ vertical cylinders~($g>3$) with parameters~$y_1=2$ and~$y_i=1$ for~$i>1$ (see also Figure~\ref{fig:modif}).
The number of squares in grey color is $y^2-2-3-(g-3)=y^2-g-2$. The surface belongs to $\mathcal H(2,2g-4)$. Since it can be continuously deformed to the surface in Figure~\ref{fig:non:hyp} with spin $0$, it also has spin $0$. Now we collapse all the grey squares. The resulting surface belongs to the stratum $\mathcal H^{\textrm{non hyp}}(2g-2)$. Again this continuous deformation does not change the parity of the spin structure.

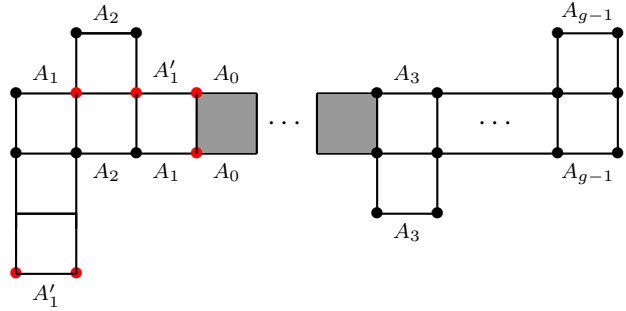
\begin{figure}[htbp]
\begin{tikzpicture}[scale=0.8]
\draw[thick] (0,0) -- (0,1) -- (4,1) -- (4,1);   \draw[thick] (5,1) -- (10,1)  -- (10,0) -- (5,0);
\draw[thick] (0,0) -- (4,0);
\draw[thick] (1,1) -- (1,2) -- (2,2)--(2,1);
\draw[thick] (6,0) -- (6,-1) -- (7,-1) -- (7,0);
\draw[thick] (9,1) -- (9,2) -- (10,2) -- (10,1);
\draw[thick] (1,2)--(2,2);

\draw[fill=gray!80] (3,0) -- (3,1) -- (4,1) -- (4,0) -- cycle; 
\draw[fill=gray!80] (5,0) -- (5,1) -- (6,1) -- (6,0) -- cycle; 

\foreach \p in {(3,0),(1,1),(2,1),(0,-2),(1,-2),(3,1)}
{
\draw[red] \p node{$\bullet$};
}

\foreach \p in {(0,0),(1,2),(2,2),(6,1),(7,1),(9,1),(10,1),(6,0),(7,0),(9,0),(10,0),(10,2),(9,2),(6,-1),(7,-1),(0,1),(1,0),(2,0)}
{
\draw[black] \p node{$\bullet$};
}

\draw[thick] (0,0) -- (0,-1.25); 
\draw[thick] (0,-2) -- (0,-1) -- (1,-1) -- (1,-2);
\draw[thick] (1,-1.25)--(1,0); 
\draw[thick] (0,-1)--(1,-1); \draw[thick] (0,-2)--(1,-2); 

\foreach \i in {1,2,3,4,5,7,9,6}
{\draw[thick] (\i,0) -- (\i,1);
}
\draw (0.5,-2) node[below]{$\scriptstyle A'_1$}; \draw (2.5,1) node[above]{$\scriptstyle A'_1$};
\draw (0.5,1) node[above]{$\scriptstyle A_1$}; \draw (2.5,0) node[below]{$\scriptstyle A_1$};
\draw (1.5,2) node[above]{$\scriptstyle A_2$}; \draw (1.5,0) node[below]{$\scriptstyle A_2$};
\draw (3.5,1) node[above]{$\scriptstyle A_0$}; \draw (3.5,0) node[below]{$\scriptstyle A_0$};
\draw (6.5,1) node[above]{$\scriptstyle A_3$}; \draw (6.5,-1) node[below]{$\scriptstyle A_3$};
\draw (9.5,2) node[above]{$\scriptstyle A_{g-1}$}; \draw (9.5,0) node[below]{$\scriptstyle A_{g-1}$};
\draw (8,0.5) node {$\cdots$};
\draw (4.5,0.5) node {$\cdots$};
\end{tikzpicture}
\caption{
\label{fig:modif}
A surface in $\mathcal H^{\textrm{non hyp}}(2,2g-4)$ for $g>3$. If we collapse the handle (in grey color) we obtain a surface
in $\mathcal H^{\textrm{non hyp}}(2g-2)$.}
\end{figure}

Following the computation in the previous subsection, we now obtain the $(g+1)\times (g+1)$ intersection matrix (recall $y^2-g-2=0$)
\[
XX^\top=
\left(
\begin{array}{c|ccccc}
g+2 & 2 & 2 & 1 & \cdots & 1 \\
\hline
2 & 1 &1 &&\\
2 & 1& 1& &\\
1 & & & 1& &\\
\vdots & & & & \ddots &\\
1 & & & & & 1\\
\end{array}
\right).
\]
By Lemma~\ref{charpoly}, the characteristic polynomial of~$XX^\top$ equals
$$
t^a\left(-(g+2)\prod_{i=1}^{g-1}(t-y_i) + t\prod_{i=1}^{g-1}(t-y_i) - \sum_{i=1}^{g-1} c_i\prod_{j\ne i}(t-y_j)   \right),
$$
where $a=1$, $c_1=4y_1$ and $c_i=y_i$ for $i\geq2$. Thus the polynomial is
\begin{multline*}
t(t-1)^{g-3} ( -(g+2)(t-2)(t-1) + t(t-2)(t-1)  -8(t-1) -(g-2)(t-2)    ) = \\
= t^2(t-1)^{g-3} (t^2-t\cdot(g+5)+2g+2 ).
\end{multline*}
In particular, the degree of the trace field is either one or two. 
The discriminant of $t^2-t\cdot(g+5)+2g+2$ is $D=(g+5)^2-8\cdot(g+1)=g^2+2g+17$. 
We see that $(g+1)^2 < D < (g+5)^2$. 
If the degree of the trace field is one then $D$ is a square, and one of the following three cases holds:
\begin{enumerate}
\item $D=(g+2)^2$. Then $g^2+2g+17=g^2+4g+4$. Solving in $g$ we find $2g=13$ which is a contradiction.
\item $D=(g+3)^2$. Then $g^2+2g+17=g^2+6g+9$. Solving in $g$ we find $6g=1$ which is a contradiction.
\item $D=(g+4)^2$. Then $g^2+2g+17=g^2+4g+4$. Solving in $g$ we find $g=2$ which is again a contradiction with $g>3$.
\end{enumerate}
This implies that~$D$ is not a square and hence the degree of the trace field must be two.

\subsubsection{Degree $2<d\leq g$}

We consider the modified version of our construction as depicted in Figure~\ref{fig:modified}. When $g>3$ the surface is not hyperelliptic.
\begin{figure}[htbp]
\begin{tikzpicture}[scale=0.8]

\draw[thick] (-1.25,0) -- (0,0) -- (0,1) -- (3.25,1);  \draw[thick] (-1.25,1) -- (0,1);

\foreach \i in {-2,-1,1,2,3,4,5}
{\draw[thick] (\i,0) -- (\i,1);}

\draw[thick] (0,0) -- (3.25,0);
\draw[thick] (3.75,0) -- (6,0) -- (6,1) -- (3.75,1);
\draw[thick] (5,1) -- (5,2) -- (6,2) -- (6,3) -- (7,3) -- (7,1) -- (6,1); \draw[thick] (6,1) -- (6,2);\draw[thick] (6,2) -- (7,2);

\draw[thick] (-1.75,0) -- (-3,0) -- (-3,1) -- (-1.75,1);

\draw[thick] (1,1) -- (1,2.25); \draw[thick] (1,2.75)--(1,4)--(2,4)--(2,2.75);
\draw[thick] (2,2.25)--(2,1);
\draw[thick] (1,2)--(2,2);\draw[thick] (1,3)--(2,3);

\draw[thick] (2,0) -- (2,-1.25); \draw[thick] (2,-1.75)--(2,-3)--(3,-3)--(3,-1.75);
\draw[thick] (3,-1.25)--(3,0);
\draw[thick] (2,-1)--(3,-1);\draw[thick] (2,-2)--(3,-2);

\draw[thick] (4,0) -- (4,-1.25); \draw[thick] (4,-1.75)--(4,-3)--(5,-3)--(5,-1.75);
\draw[thick] (5,-1.25)--(5,0);
\draw[thick] (4,-1)--(5,-1);\draw[thick] (4,-2)--(5,-2);

\foreach \i in {-3,1,2,3,4,5,6}
{\draw[black] (\i,0) node{$\bullet$};
\draw[black] (\i,1) node{$\bullet$};
}
\draw[black] (7,1) node{$\bullet$};
\draw[black] (5,2) node{$\bullet$};\draw[black] (6,2) node{$\bullet$};\draw[black] (7,2) node{$\bullet$};
\draw[black] (6,3) node{$\bullet$};\draw[black] (7,3) node{$\bullet$};
\draw[black] (1,4) node{$\bullet$};\draw[black] (2,4) node{$\bullet$};
\draw[black] (2,-3) node{$\bullet$};\draw[black] (3,-3) node{$\bullet$};
\draw[black] (4,-3) node{$\bullet$};\draw[black] (5,-3) node{$\bullet$};

\draw[red] (-1,0) node{$\bullet$};\draw[red] (-1,1) node{$\bullet$};
\draw[red] (-2,0) node{$\bullet$};\draw[red] (-2,1) node{$\bullet$};
\draw[red] (0,0) node{$\bullet$};\draw[red] (0,1) node{$\bullet$};
\draw (-1.45,0.5) node {$\dots$};  \draw (3.51,0.5) node {$\dots$};
\draw (1.5,2.6) node {$\vdots$}; \draw (2.5,-1.4) node {$\vdots$}; \draw (4.5,-1.4) node {$\vdots$};

\draw (-1.5,1) node[above]{$\scriptstyle A_0$}; \draw (-1.5,0) node[below]{$\scriptstyle A_0$};
\draw (1.5,0) node[below]{$\scriptstyle A_1$}; \draw (1.5,4) node[above]{$\scriptstyle A_1$};
\draw (2.5,1) node[above]{$\scriptstyle A_2$}; \draw (2.5,-3) node[below]{$\scriptstyle A_2$};
\draw (4.5,1) node[above]{$\scriptstyle A_{g-3}$}; \draw (4.5,-3) node[below]{$\scriptstyle A_{g-3}$};
\draw (5.5,2) node[above]{$\scriptstyle A_{g-2}$}; \draw (5.5,0) node[below]{$\scriptstyle A_{g-2}$};
\draw (6.5,3) node[above]{$\scriptstyle A_{g-1}$}; \draw (6.5,1) node[below]{$\scriptstyle A_{g-1}$};

\end{tikzpicture}
\caption{\label{fig:modified}
A surface in the even spin non hyperelliptic connected component of $\mathcal H(2g-2)$ for $g>3$.}
\end{figure}
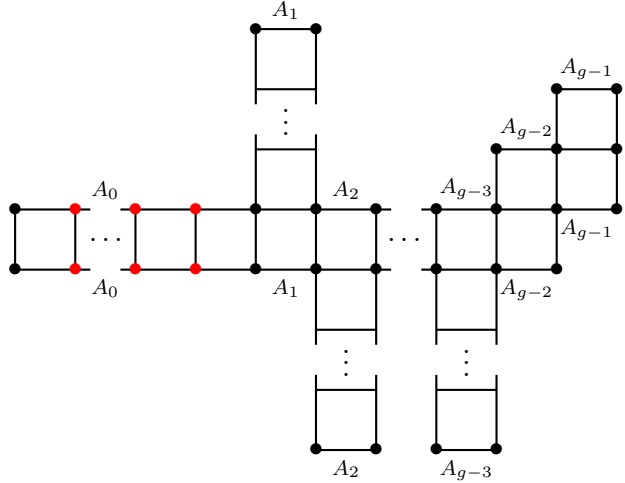

For the computation of the spin structure, we consider the ``obvious'' core curves $\gamma_i$ and $\eta_i$ (for $i=0,\dots,g-1$) of the horizontal and vertical cylinders.
It forms a (non symplectic) basis of the homology:
$$
\begin{array}{l}
i(\gamma_0,\eta_i)=1 \textrm{ for } i \neq g-1 \textrm{ and } i(\gamma_0,\eta_{g-1})=0, \\
i(\gamma_i,\eta_i)=\delta_{ij} \textrm{ for } i= 0,\dots,g-1 \\
i(\gamma_i,\gamma_j)=i(\eta_i,\eta_j)=0.
\end{array}
$$
We can thus form a symplectic basis of $H_1(S;\mathbb Z_2)$ as follows:
$$
\left\{\begin{array}{ll}
a_1 = \gamma_0, & b_1 = \eta_0 \\
a_i = \gamma_{i-1}, &  b_i = \eta_{i-1} - \eta_0\  \textrm{ for } i=2,\dots,g-1 \\
a_{g} = \gamma_{g-1}, &  b_g = \eta_{g-1} - b_{g-1} = \eta_{g-1} - \eta_{g-2} + \eta_0
\end{array}
\right.
$$
Equation~\eqref{eq:spin} reads
$$
\Phi(\omega) = q(\gamma_0)q(\eta_0) + \sum_{i=2}^{g-1} q(\gamma_{i-1})q(\eta_{i-1}-\eta_0) + q(\gamma_{g-1})q(\eta_{g-1} - \eta_{g-2} + \eta_0)
$$
Since $q(\eta_{i-1}-\eta_0) = q(\eta_{i-1}) + q(\eta_0) + i(\eta_{i-1},\eta_0) = 1 + 1 + 0 = 0\mod 2$, the sum with the $g-2$ terms vanishes.
For the last term, a direct computation leads to
\begin{multline*}
q(\eta_{g-1} - \eta_{g-2} + \eta_0) = q(\eta_{g-1}) + q(\eta_{g-2}) + q(\eta_0) + \\ 
+ i(\eta_{g-1},\eta_{g-2}) + i(\eta_{g-1},\eta_0) + i(\eta_{g-2} + \eta_0) = \\
= 1 + 1 + 1 + 0 + 0 + 0 = 1 \mod 2.
\end{multline*}
Finally we get $\Phi(\omega) = 1 + 0 + 1 = 0 \mod 2$.

The intersection matrix (with the parameters $y_{g-2}=y_{g-1}=1$) is
\[
XX^\top=
\left(
\begin{array}{c|ccccc}
y^2 & \mathbf{1}_{1\times y_{1}} & \cdots & \mathbf{1}_{1\times y_{g-3}} & 1 & 0\\
\hline
\mathbf{1}_{y_{1}\times 1} & \mathbf{1}_{y_1} & &&0&0\\
\vdots & & \ddots &&\vdots&\vdots\\
\mathbf{1}_{y_{g-3}\times 1} & & & \mathbf{1}_{y_{g-3}}&0&0\\
1 &0 & \cdots& 0&2&1\\
0 &0 &\cdots & 0&1&1\\
\end{array}
\right).
\]
By developing along the last column, its characteristic polynomial equals
\begin{multline}
(t-1)\left( (t-2)p(t,y,\mathbf{y})- t^a\prod_{i=1}^{g-3}(t-y_i) \right) - p(t,y,\mathbf{y})=\\
p(t,y,\mathbf{y})(t^2-3t+1)-t^a(t-1)\prod_{i=1}^{g-3}(t-y_i)=\\
= t^a\left(-y^2(t^2-3t+1)\prod_{i=1}^{g-3}(t-y_i) + (t^3-3t^2+1)\prod_{i=1}^{g-3}(t-y_i) \right. \\ \left. -(t^2-3t+1)\sum_{i=1}^{g-3} c_i\prod_{j\ne i}(t-y_j)   \right),
\end{multline}
where $p(t,y,\mathbf{y})$ is the degree $g-2$ polynomial in Lemma~\ref{charpoly}, 
with the parameters~$a=\sum_{i=1}^{g-3}(y_i-1)$ and $c_i=y_i$ for all~$1\le i \le g-3$.
Following the same line of proof we used for Lemma~\ref{irred}, we show
\begin{lem}
The polynomial
$$
-y^2(t^2-3t+1)\prod_{i=1}^{g-3}(t-y_i) + (t^3-3t^2+1)\prod_{i=1}^{g-3}(t-y_i) - (t^2-3t+1)\sum_{i=1}^{g-3} y_i\prod_{j\ne i}(t-y_j)   
$$
is irreducible in~$\mathbb{Z}[t,y]$ given that all~$y_i\in \mathbb{N}$ are distinct.
\end{lem}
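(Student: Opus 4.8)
The plan is to follow the proof of Lemma~\ref{irred} essentially verbatim, the only new feature being the extra irreducible quadratic factor $t^2-3t+1$ that now appears in the coefficient of $y^2$. Write the polynomial as $P(t,y)=A(t)\,y^2+C(t)$, viewed in $(\mathbb{Z}[t])[y]$, where
$$A(t)=-(t^2-3t+1)\prod_{i=1}^{g-3}(t-y_i), \qquad C(t)=P(t,0).$$
As in Lemma~\ref{irred}, there is no term linear in $y$. It therefore suffices to show (i) that $A(t)$ and $C(t)$ are coprime in $\mathbb{Z}[t]$, which makes $P$ primitive in $y$ and forces any nontrivial factorisation to be into two factors linear in $y$, and (ii) that Eisenstein's criterion applies with respect to a suitable prime $q(t)\in\mathbb{Z}[t]$, ruling out such a factorisation.

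For step (i), the irreducible factors of $A(t)$ over $\mathbb{Q}$ are the linear polynomials $t-y_i$ together with $t^2-3t+1$, the latter being irreducible since its discriminant is $5$. Evaluating $C(t)=P(t,0)$ at $t=y_i$ annihilates every summand except one, giving
$$P(y_i,0)=-(y_i^2-3y_i+1)\,y_i\prod_{j\ne i}(y_i-y_j),$$
which is nonzero because the $y_i\ge 1$ are distinct and $y_i^2-3y_i+1$ has no integer root; hence no $t-y_i$ divides $C(t)$. For the quadratic factor, let $\alpha$ be a root of $t^2-3t+1$, so $\alpha^2=3\alpha-1$ and therefore $\alpha^3-3\alpha^2+1=1-\alpha\ne 0$; since the sum term of $C$ carries the factor $t^2-3t+1$ and so vanishes at $\alpha$, we obtain $P(\alpha,0)=(1-\alpha)\prod_{i}(\alpha-y_i)\ne 0$ because $\alpha$ is irrational. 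Thus $t^2-3t+1\nmid C(t)$, and $A(t)$ and $C(t)$ are coprime.

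For step (ii), I identify $C(t)=P(t,0)$, up to the factor $t^a$ of Lemma~\ref{charpoly}, with the characteristic polynomial of the nonnegative symmetric matrix obtained by setting $y=0$ in the matrix $XX^\top$ displayed above in this subsection. This matrix is irreducible since its underlying graph is connected, so by Perron--Frobenius it has a simple leading eigenvalue $\theta$, which is a simple root of $C(t)$. Let $q(t)\in\mathbb{Z}[t]$ be the irreducible factor of $C(t)$ containing $\theta$; then $q\mid C$ but $q^2\nmid C$, while $q$ divides the (vanishing) coefficient of $y^1$ trivially. Finally, $q\nmid A$: the leading eigenvalue of an irreducible nonnegative matrix strictly exceeds that of any proper principal submatrix, so $\theta$ is strictly larger than each $y_i$ (the Perron root of a block $\mathbf{1}_{y_i}$) and than $\tfrac{3+\sqrt5}{2}$ (the Perron root of the block $\left(\begin{smallmatrix}2&1\\1&1\end{smallmatrix}\right)$); hence $\theta$ is not a root of $A$. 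Eisenstein's criterion with the prime $q$ then shows that $P(t,y)$ admits no factorisation into factors of positive degree in $y$ over $\mathbb{Q}(t)$, and together with the primitivity from step (i), Gauss's lemma gives irreducibility in $\mathbb{Z}[t,y]$. The main new point compared to Lemma~\ref{irred} is exactly the verification that the Perron root $\theta$ avoids the roots of the new quadratic factor $t^2-3t+1$, which is where I expect the only genuine subtlety to lie.
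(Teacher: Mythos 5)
Your proof is correct and takes essentially the same route as the paper's, which likewise regards the polynomial as quadratic in $y$, checks that the coefficient of $y^2$ and the constant coefficient $P(t,0)$ have no nontrivial common factor, extracts via Perron--Frobenius a simple irreducible factor $q(t)$ of the constant coefficient, and applies Eisenstein's criterion in $\left(\mathbb{Z}[t]\right)[y]$. Your only deviation is the direct Perron--Frobenius monotonicity argument that $q\nmid A$; this is correct but redundant, since $q\mid C$ together with the coprimality of $A$ and $C$ established in your step (i) already yields it, exactly as in the paper's Lemma~\ref{irred}.
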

\begin{proof}
We follow the proof of Lemma~\ref{irred}.
We note that the polynomial has degree two in the variable~$y$ with no non trivial common factor between the coefficient of~$y^2$ and the constant 
coefficient. By the Perron-Frobenius theorem, there is a simple irreducible factor of the constant coefficient. Thus Eisenstein's criterion applies in $\left(\mathbb{Z}[t]\right)[y]$.
\end{proof}

\subsection{Reaching the hyperelliptic components of strata~$\mathcal H(2g-2)$ and $\mathcal H(g-1,g-1)$}

We start by constructing a square-tiled surface. Pick a long horizontal square-tiled cylinder made of $2n+1$ squares with 
identifications $A_0$, $A_1,\dots,A_n$ and $A'_1,\dots,A'_n$ as depicted in Figure~\ref{hyp}. We then add a stair case 
template, made of $k$ steps, using a total of $2k$ squares. Finally we insert a long vertical square-tiled cylinder with some large 
number~$y$ of squares and identifications $C_1,\dots,C_y$ as in Figure~\ref{hyp}. We treat~$y$ as a variable that we will need to specify later on. 
This creates a surface $X_{n,k,y}$. Similarly, one can construct a surface $Y_{n,k,y}$ be collapsing one square corresponding to the label $A_0$. 
\begin{figure}[htbp]
\begin{tikzpicture}[scale=0.8]
\draw[thick] (0,0) -- (0,1) -- (1.25,1); \draw[thick] (1.75,1) -- (5.25,1);
\draw[thick] (5.75,1) -- (8,1) -- (8,0) -- (5.75,0);
\draw[thick] (5.25,0) --  (1.75,0);
\draw[thick] (0,0) --  (1.25,0);
\foreach \i in {1,2,3,4,5,6,7}
{\draw[thick] (\i,0) -- (\i,1);}
\filldraw [draw=black, fill=gray!80] (3,0) -- (3,1) -- (4,1) -- (4,0);

\foreach \i in {0,1}
{\draw[thick] (7+\i ,1+\i) -- (7+\i,2+\i) -- (9+\i,2+\i) -- (9+\i,1+\i) -- cycle;
\draw[thick] (8+\i ,1+\i)--(8+\i ,2+\i);}

\draw[thick] (10,4) -- (10,5) -- (12,5) -- (12,4) -- cycle;
\draw[thick] (11,4) -- (11,7.25);
\draw[thick] (11,7.75)  -- (11,10) -- (12,10) -- (12,7.75);
\draw[thick] (12,5) -- (12,7.25);

\foreach \i in {6,7,8,9}
{\draw[thick] (11,\i) -- (12,\i);}

\foreach \p in {(3,1),(3,0),(5,1),(1,0),(7,1),(9,1),(9,3),(10,4),(12,4),(12,10),(11,6),(12,5),(10,5),(9,2),(7,2),(7,0),(1,1),(5,0)}
{
\draw[fill=black] \p circle (2.2pt);
}
\foreach \p in {(0,1),(6,0),(2,1),(4,0),(4,1),(2,0),(6,1),(0,0),(8,0),(8,2),(10,2),(11,5),(12,9),(11,10),(11,4),(10,3),(8,3),(8,1)}
{
\draw[fill=white] \p circle (2.2pt);
}

 \draw (2.5,1) node[above]{$\scriptstyle A_n$}; \draw (0.5,1) node[above]{$\scriptstyle A_1$};
 \draw (2.5,0) node[below]{$\scriptstyle A'_1$}; \draw (0.5,0) node[below]{$\scriptstyle A'_n$};
\draw (1.5,0.5) node {$\dots$};
\draw (3.5,1) node[above]{$\scriptstyle A_0$}; \draw (3.5,0) node[below]{$\scriptstyle A_0$};
\draw (4.5,1) node[above]{$\scriptstyle A'_1$}; \draw (4.5,0) node[below]{$\scriptstyle A_n$};
\draw (6.5,1) node[above]{$\scriptstyle A'_n$}; \draw (6.5,0) node[below]{$\scriptstyle A_1$};
\draw (5.5,0.5) node {$\dots$};
\draw (7.5,2) node[above]{$\scriptstyle B_1$}; \draw (7.5,0) node[below]{$\scriptstyle B_1$};
\draw (8.5,3) node[above]{$\scriptstyle B_2$}; \draw (8.5,1) node[below]{$\scriptstyle B_2$};
\draw (9.5,2) node[below]{$\scriptstyle B_3$}; \draw (10.1,5) node[above]{$\scriptstyle B_k$};

\draw (12,5.5) node[right]{$\scriptstyle C_1$};\draw (12,6.5) node[right]{$\scriptstyle C_2$};
\draw (12.1,7.5) node[right]{$\vdots$};
\draw (12,9.5) node[right]{$\scriptstyle C_y$};\draw (12,8.5) node[right]{$\scriptstyle C_{y-1}$};
\draw (11,5.5) node[left]{$\scriptstyle C_y$};\draw (11,6.5) node[left]{$\scriptstyle C_1$};
\draw (11,7.5) node[left]{$\scriptstyle C_2$}; \draw (10.9,8.5) node[left]{$\vdots$}; 
\draw (11,9.5) node[left]{$\scriptstyle C_{y-1}$};

\draw (10,3.1) node[above]{$\iddots$}; 
\draw (11.5,7.1) node[above]{$\vdots$}; 

\end{tikzpicture}
\caption{The surface $X_{n,k,y} \in \mathcal H(g-1,g-1)$ made of $2n+2k+y+2$ squares, and the surface $Y_{n,k,y} \in \mathcal H(2g-2)$ obtained from $X_{n,k,y}$ by collapsing the grey square.}
\label{hyp}
\end{figure}
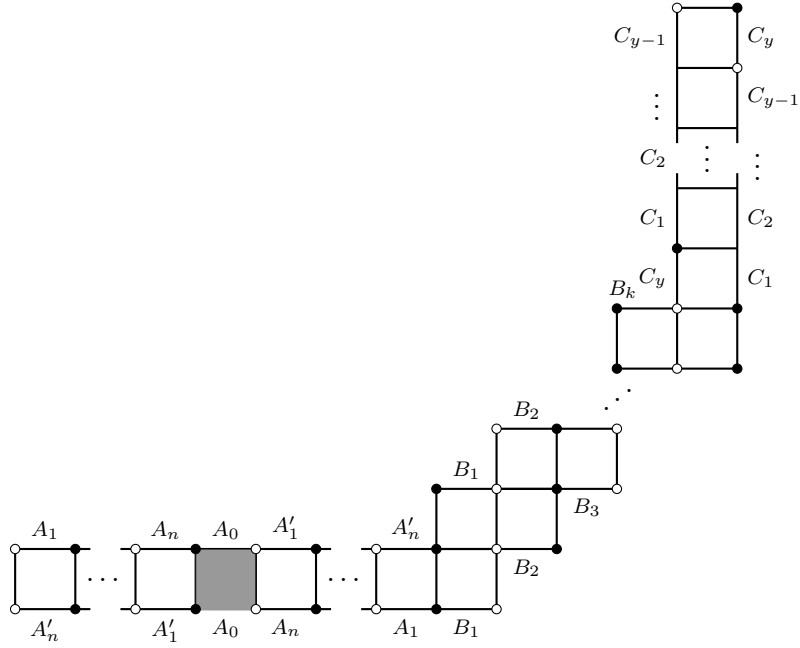

\begin{lem}
\label{hyp:strata}
The genus of $X_{n,k,y}$ and $Y_{n,k,y}$ is $g=n+k+2$. Moreover $X_{n,k,y}$ belongs to the hyperelliptic connected component of $\mathcal H(g-1,g-1)$ while $Y_{n,k,y}$ belongs to the hyperelliptic connected component of $\mathcal H(2g-2)$.
\end{lem}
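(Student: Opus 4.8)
The plan is to determine the stratum of each surface by a direct flat-geometric analysis of the gluings in Figure~\ref{hyp}, and then to establish hyperellipticity by exhibiting the hyperelliptic involution as a manifest symmetry of the picture.

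First I would compute the genus and the zeros. Every edge of the square-tiling is shared by exactly two squares, so with $F$ the number of squares one has $E=2F$ edges, and hence $2-2g=V-E+F=V-F$, where $V$ is the number of vertex-classes after gluing. For $X_{n,k,y}$ I would read off from Figure~\ref{hyp} that the vertices fall into exactly two equivalence classes, the \emph{black} and the \emph{white} vertices, and verify that each is a cone point by summing the angles of the squares meeting there; the symmetry of the picture makes the two cone angles equal, so the two zeros have the same order, say $k_1=k_2$. Since $F=2n+2k+y+2$ while lengthening the vertical cylinder increases $V$ and $F$ in lockstep, the quantity $F-V$, and hence $g$, is independent of $y$; the bookkeeping yields $g=n+k+2$, and the relation $k_1+k_2=2g-2$ then forces $k_1=k_2=g-1$, so $X_{n,k,y}\in\mathcal H(g-1,g-1)$. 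Collapsing the grey square merges the black and white classes into a single vertex without changing the genus, so $Y_{n,k,y}$ has one zero of order $2g-2$ and lies in $\mathcal H(2g-2)$.

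For hyperellipticity I would exhibit the order-two isometry $\tau$ given by the rotation by $\pi$ about the centre of the long horizontal cylinder, extended compatibly through the staircase and the vertical cylinder. The symmetric placement of the labels $A_i,A_i'$, $B_i$ and $C_i$ is arranged precisely so that $\tau$ respects every edge identification; it is therefore a well-defined automorphism of the flat surface with $\tau^*\omega=-\omega$. Counting the fixed points of $\tau$, using as in Remark~\ref{rk:not:hyp} that each $\tau$-invariant cylinder contributes exactly two interior fixed points, together with the fixed points lying on the core curves and boundaries, I would verify that $\tau$ has exactly $2g+2$ fixed points. By Riemann--Hurwitz the quotient $X/\tau$ is then a sphere, so $X_{n,k,y}$ (respectively $Y_{n,k,y}$) is hyperelliptic with $\tau$ as its hyperelliptic involution. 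Finally I would check that $\tau$ \emph{interchanges} the black and white zeros of $X_{n,k,y}$, which places it in the hyperelliptic component $\mathcal H^{\mathrm{hyp}}(g-1,g-1)$, whereas on $Y_{n,k,y}$ the single zero is fixed, placing it in $\mathcal H^{\mathrm{hyp}}(2g-2)$; by the classification of~\cite{KZ} these are exactly the conditions characterising the hyperelliptic components.

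The main obstacle I anticipate is the verification, through the sheared gluings of the staircase and the shifted vertical cylinder $C_1,\dots,C_y$, that $\tau$ is genuinely well defined and that its fixed-point count is exactly $2g+2$; a miscount there would break the Riemann--Hurwitz step. Equally delicate is confirming that $\tau$ exchanges rather than fixes the two zeros of $X_{n,k,y}$, since a hyperelliptic surface in $\mathcal H(g-1,g-1)$ with both zeros fixed would \emph{not} lie in the hyperelliptic component. Both points reduce to a careful inspection of the identifications in Figure~\ref{hyp}, and I would organise the fixed-point count cylinder by cylinder to keep it transparent.
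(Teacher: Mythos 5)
Your proposal is correct in substance but takes a genuinely different route from the paper. The paper computes the genus by a direct cone-angle count: it reads off from the gluings that $X_{n,k,y}$ has two zeros of equal order, each with cone angle $g\cdot 2\pi$, and equates the total corner angle of the $2n+2k+4$ squares incident to the singularities with $g\cdot 2\pi + g\cdot 2\pi$, giving $g=n+k+2$; for $Y_{n,k,y}$ one square's worth of angle disappears, yielding a single zero of cone angle $(2g-1)\cdot 2\pi$, i.e.\ order $2g-2$. Hyperellipticity is then simply asserted, the involution being the rotation by $\pi$ of each of the $k+2$ horizontal cylinders. You instead go through the Euler characteristic $2-2g=V-F$ (using $E=2F$) and then through an explicit fixed-point count plus Riemann--Hurwitz to certify that $\tau$ is the hyperelliptic involution; both routes are standard and sound, though yours requires more bookkeeping, precisely at the two places you yourself flag as delicate. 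One slip to repair: your claim that ``the vertices fall into exactly two equivalence classes'' is false as literally stated --- the long horizontal and vertical cylinders contain many regular vertex classes where four squares meet, and indeed if $V=2$ your own formula would force $g$ to depend on $y$, contradicting your (correct) observation that lengthening the vertical cylinder changes $V$ and $F$ in lockstep. What is true, and what you need, is that the \emph{singular} vertices form exactly two classes (the black and the white ones), all other classes being regular $2\pi$-points. Finally, your insistence on verifying that $\tau$ \emph{exchanges} the two zeros of $X_{n,k,y}$ is a genuine improvement in care over the paper's proof: by the Kontsevich--Zorich classification this is exactly what membership in $\mathcal H^{\mathrm{hyp}}(g-1,g-1)$ requires (mere hyperellipticity does not suffice when there are two zeros), and the paper passes over this point silently.
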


\begin{proof}[Proof of Lemma~\ref{hyp:strata}]
Clearly the two square-tiled surfaces are hyperelliptic: the involution fixes the $k+2$ horizontal cylinders. By inspecting the gluing, one sees that $X_{n,k,y}$ has two  zeros, each of order $g-1$. The cone angle at each zero is $g\cdot 2\pi$. Since the total number of squares contributing to the cone angle is $2n+2k+2+2$, we get $(2n+2k+4)\cdot 2\pi=g\cdot 2\pi + g\cdot 2\pi$. Hence, $g=n+k+2$.\\
Similarly, $Y_{n,k,y}$ has one zero, of order $2g-2$ and cone angle $(2g-1)\cdot 2\pi$. Now the total number of squares contributing to the cone angle is one less: $2n+2k+2+1$. Thus, $(2n+2k+3)\cdot 2\pi=(2g-1)\cdot 2\pi$.
\end{proof}
\noindent A quick inspection of the intersections of horizontal curves with vertical curves yields that~$XX^\top$ is the following $(k+2)\times (k+2)$ Jacobi matrix:
\[
XX^\top=
\left(
\begin{array}{c|cccccc}
\alpha & 1 & \\
\hline 
1 & 2 & 1 &  \\
 & 1 & \ddots & \ddots &  \\
 &  & \ddots & \ddots & 1 \\
  &  &  & 1 & 2 & y \\
  & &  &  & y & y^2   \\
\end{array}
\right),
\]
where $\alpha=2+4n$ if one considers $X_{n,k,y}$, and  $\alpha=1+4n$ otherwise.
\begin{lem}
\label{charpoly:XXt:hyp}
If $\alpha\neq 1$, then the characteristic polynomial of~$XX^\top$, when regarded as a polynomial in the variables $y$ and $t$, is irreducible in $\mathbb Z[t,y]$.
\end{lem}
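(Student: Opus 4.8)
The plan is to follow the template of Lemma~\ref{irred}: exhibit the characteristic polynomial as a quadratic in~$y$, show that its leading and constant coefficients (as polynomials in~$t$) are coprime, and then rule out a splitting into two $y$-linear factors by Eisenstein's criterion.

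First I would compute $\det(tI-XX^\top)$ by exploiting the Jacobi (tridiagonal) shape of the matrix. Writing $P_j\in\mathbb Z[t]$ for the determinant of the top-left $j\times j$ block of $tI-XX^\top$, the standard three-term recurrence gives $P_1=t-\alpha$, $P_j=(t-2)P_{j-1}-P_{j-2}$ for $2\le j\le k+1$, and, since the last off-diagonal entry is~$y$ and the last diagonal entry of $XX^\top$ is~$y^2$, also $P_{k+2}=(t-y^2)P_{k+1}-y^2P_k$. Expanding, the characteristic polynomial becomes $P_{k+2}=t\,P_{k+1}-y^2(P_{k+1}+P_k)$, a polynomial of degree two in~$y$ whose $y^2$-coefficient $-(P_{k+1}+P_k)$ and constant coefficient $t\,P_{k+1}$ lie in $\mathbb Z[t]$ and are free of~$y$. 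Here $P_{k+1}$ and $P_k$ are the characteristic polynomials of the two leading Jacobi submatrices of $XX^\top$.

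Next I would establish coprimality of these two coefficients in $\mathbb Z[t]$, which (exactly as in Lemma~\ref{irred}) forces any nontrivial factorisation of $P_{k+2}$ to be into two factors linear in~$y$: a factor of $y$-degree zero would divide both coefficients, contradicting coprimality. Since $\gcd(P_{k+1}+P_k,\,P_{k+1})=\gcd(P_k,P_{k+1})$, and the recurrence $P_{j-2}=(t-2)P_{j-1}-P_j$ propagates any common root of $P_{k+1},P_k$ downward to $P_0=1$, consecutive minors are coprime; thus the only possible common factor of $t\,P_{k+1}$ and $P_{k+1}+P_k$ is~$t$. This is precisely where the hypothesis $\alpha\neq 1$ enters: evaluating the recurrence at $t=0$ yields $P_j(0)=(-1)^j\bigl(1+(\alpha-1)j\bigr)$, whence $(P_{k+1}+P_k)(0)=(-1)^{k+1}(\alpha-1)\neq 0$, so $t\nmid(P_{k+1}+P_k)$ and the coprimality holds.

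Finally, for the Eisenstein step I would use that the leading $(k+1)\times(k+1)$ block of $XX^\top$ is an irreducible nonnegative symmetric tridiagonal matrix with nonzero off-diagonal entries; such a matrix has simple spectrum, so $P_{k+1}$ is squarefree and its Perron--Frobenius eigenvalue is a positive simple root. Letting $q(t)$ be the monic integral minimal polynomial of that eigenvalue, $q$ divides the constant coefficient $t\,P_{k+1}$ exactly once (as $q\neq t$ and $P_{k+1}$ is squarefree) and, by the coprimality above, does not divide the $y^2$-coefficient. Eisenstein's criterion in $(\mathbb Z[t])[y]$ with respect to the prime~$q$ then rules out a factorisation into $y$-linear factors, and together with the coprimality argument this shows $P_{k+2}$ is irreducible in $\mathbb Z[t,y]$. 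I expect the main obstacle to be the coprimality bookkeeping, in particular isolating the constant-term computation $(P_{k+1}+P_k)(0)=(-1)^{k+1}(\alpha-1)$, which pinpoints exactly why the excluded case $\alpha=1$ is genuinely excluded.
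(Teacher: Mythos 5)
Your proposal is correct, and its skeleton coincides with the paper's: your identity $P_{k+2}=t\,P_{k+1}-y^2\left(P_{k+1}+P_k\right)$ is exactly the paper's decomposition $p_k(t,y)=t\,q_k(t)-y^2\left(q_k(t)+q_{k-1}(t)\right)$ (with $q_k=P_{k+1}$, $q_{k-1}=P_k$), and the endgame --- coprimality of the two $y$-coefficients to exclude a $y$-degree-zero factor, then Eisenstein in $\left(\mathbb{Z}[t]\right)[y]$ at an irreducible factor $q\ne t$ of $P_{k+1}$ to exclude a product of two $y$-linear factors --- is the same as in the paper and in Lemma~\ref{irred}. Where you genuinely diverge is in how the two spectral inputs are established. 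The paper shows that the roots of $q_k$ and $q_{k-1}$ are simple and pairwise distinct via Cauchy interlacing made \emph{strict} by oscillatory-matrix theory: $B_k$ is positive definite, hence oscillatory by Gantmacher--Krein~\cite{GK}, and strict interlacing follows from Ando~\cite{Ando}. You instead use the elementary facts that a symmetric tridiagonal matrix with nonzero off-diagonal entries has simple spectrum, and that the three-term recurrence $P_{j-2}=(t-2)P_{j-1}-P_j$ propagates a common root of consecutive minors down to $P_0=1$, yielding $\gcd(P_{k+1},P_k)=1$. This proves slightly less than strict interlacing but exactly what the lemma needs, and it dispenses with the two external citations. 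Likewise, where the paper rules out $t$ as a common factor by computing $p_k(0,y)=\pm\det(XX^\top)=\pm y^2(\alpha-1)\ne 0$, you evaluate the recurrence at $t=0$ to get $P_j(0)=(-1)^j\bigl(1+(\alpha-1)j\bigr)$, hence $(P_{k+1}+P_k)(0)=(-1)^{k+1}(\alpha-1)$; this is the same use of the hypothesis $\alpha\ne 1$, localized in a cleaner closed-form computation (and both evaluations check against each other). In short: identical decomposition and Eisenstein strategy, but a more elementary and self-contained treatment of the simplicity/coprimality step; the paper's oscillatory-matrix route buys the stronger strict-interlacing statement, which is reused verbatim in the later staircase computation for $\mathcal H(2g-2)$ with degree $g$, whereas your argument would have to be repeated there in its recurrence form.
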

\begin{proof}[Proof of Lemma~\ref{charpoly:XXt:hyp}]
Let $p_k$ be the characteristic polynomial of $XX^\top$. We will use the characteristic polynomial $q_k(t)$ of the $(k+1)\times(k+1)$ matrix $B_k$ obtained from $2I_{k+1} + \mathrm{Ad}(A_{k+1})$ by adding~$\alpha-2$ to the first diagonal entry, where $\mathrm{Adj}(A_k)$ is the adjacency matrix of the path graph with $k$ vertices.
We obtain directly by developing the determinant of $t\mathrm{Id}_{k+2}- XX^\top$ along the last column that $p_k(t,y)=-y^2(q_{k}(t)+q_{k-1}(t)) + tq_{k}(t)$. 
We now claim that the roots of $q_{k}$ and $q_{k-1}$ are pairwise distinct and simple.

\begin{proof}[Proof of the claim]
We note that~$B_{k-1}$ is obtained from~$B_{k}$ by deleting the last row and the last column. 
Interlacing results for real symmetric matrices tell us that the eigenvalues of~$B_{k}$ and~$B_{k-1}$ interlace. 
This means that if~$\lambda_1 \le \cdots \le \lambda_{k+1}$ are the eigenvalues of~$B_{k}$ and if~$\mu_1 \le \cdots \le \mu_{k}$ 
are the eigenvalues~$B_{k-1}$, then we have~$$\lambda_i \le \mu_i \le \lambda_{i+1}$$ for all~$1\le i \le {k}$. 
The crucial point is that in our case these inequalities are strict, which can be proved as follows.
We first note that the matrix~$B_k$ is clearly symmetric and positive definite (for~$\alpha \ge 1$). 
This implies that all its leading principal minors are positive.
Now, Theorem~7 by Gantmacher and Krein~\cite{GK} states that a tridiagonal matrix with positive coefficients 
on the main diagonal and the adjacent diagonals is oscillatory if and only if all the leading principal minors are positive, 
implying that~$B_k$ is oscillatory. 
In turn, Theorem~6.5 by Ando~\cite{Ando} states that for oscillatory matrices, all the interlacing inequalities are strict. 
That is, if~$\lambda_1 \le \cdots \le \lambda_{k+1}$ are the eigenvalues of~$B_{k}$ and if~$\mu_1 \le \cdots \le \mu_{k}$ 
are the eigenvalues~$B_{k-1}$, then we have
$$\lambda_i < \mu_i < \lambda_{i+1}$$ for all~$1\le i \le {k}$. 
In particular, the eigenvalues of~$B_{k}$ and the eigenvalues of~$B_{k-1}$ are pairwise distinct and simple.
\end{proof}
We now finish the proof the lemma. Let $F\neq t$ be an irreducible factor of~$q_k$. Since the roots of~$q_k$ are simple, $F^2$ is not a factor of $tq_k$. If $F$ is a factor of 
$q_k+q_{k-1}$ then $q_k$ and $q_{k-1}$ share a common root, which is not possible by the claim. Hence, by Eisenstein's criterion, $p_{k}(t,y)$ is irreducible when regarded as a polynomial in the variable~$y$ and so can not be factored in the form $(ay+b)(cy+d)$. So, if there is a factorisation of $p_{k}(t,y)$, then one of the factors must have degree zero in the variable $y$. But such a factorisation cannot exist, since $q_k(t)+q_{k-1}(t)$ and $tq_k(t)$ are relatively prime in $\mathbb Z[t]$. Indeed, since the roots of $q_k(t)$ and $q_{k-1}(t)$ are distinct, the only possible common factor of~$tq_k$ and~$q_k + q_{k-1}$ is $t$. But $p_k(0,y)$ is the determinant of~$XX^\top$ and equals~$y^2\cdot(\alpha-1)\neq 0$. This proves the lemma.
\end{proof}
\begin{thm}
\label{thm:hyp}
For any hyperelliptic connected component $\mathcal C$ of $\mathcal H(2g-2)$,
every number $1 \leq d \leq g-1$ is realised as the degree of the trace field of a product of two affine multitwists on a surface in $\mathcal C$.
\end{thm}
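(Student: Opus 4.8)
The plan is to realise the degrees $2\le d\le g-1$ with the square-tiled surfaces $Y_{n,k,y}$ of Figure~\ref{hyp}, and to treat the arithmetic degree $d=1$ separately by a one-cylinder example. Recall from Lemma~\ref{hyp:strata} that $Y_{n,k,y}$ lies in the hyperelliptic component of $\mathcal H(2g-2)$ with $g=n+k+2$, and that its intersection matrix $XX^\top$ is the $(k+2)\times(k+2)$ Jacobi matrix displayed above, with first diagonal entry $\alpha=1+4n$. The degree of the trace field of $T_\alpha\circ T_\beta$ equals the degree of the Perron--Frobenius eigenvalue $\mu^2$ of $XX^\top$, so the task is to control the latter.

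First I would fix $d$ with $2\le d\le g-1$ and set $n=g-d$ and $k=d-2$, so that $n\ge 1$, $k\ge 0$ and $n+k+2=g$. The point of choosing $n\ge 1$ is that then $\alpha=1+4n\ge 5\neq 1$, so Lemma~\ref{charpoly:XXt:hyp} applies: the characteristic polynomial of $XX^\top$ is irreducible in $\mathbb Z[t,y]$, and it has degree $k+2=d$ in the variable $t$. The boundary case $k=0$, that is $d=2$, is covered by the same argument with the convention $q_{-1}=1$, and can also be checked by hand, since then $XX^\top=\left(\begin{smallmatrix}\alpha & y\\ y & y^2\end{smallmatrix}\right)$ has characteristic polynomial irreducible as a quadratic in $y$ whenever $\alpha\neq 1$. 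This is exactly where the hypothesis $n\ge1$, equivalently $d\le g-1$, enters, and it explains the upper bound in the statement: for $n=0$ we would have $\alpha=1$ and $\det(XX^\top)=0$, so the polynomial would be reducible.

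Next I would specify $y$. By Hilbert's irreducibility theorem~\cite{Lang} there are infinitely many integer values of $y$ for which the characteristic polynomial, now viewed in $\mathbb Z[t]$, stays irreducible of degree $d$. For all large enough such $y$ the surface $Y_{n,k,y}$ is realised in the hyperelliptic component of $\mathcal H(2g-2)$ by Lemma~\ref{hyp:strata}, and the diagonal entry $y^2$ forces $\mu^2\ge y^2>4$, so $T_\alpha\circ T_\beta$ is pseudo-Anosov by Thurston--Veech's construction~\cite{Th,Veech:construction}. Since $\mu^2$ is a root of an irreducible integer polynomial of degree $d$, the trace field $\mathbb Q(\mu^2)$ has degree $d$, as desired.

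Finally I would handle $d=1$, which lies outside the family $Y_{n,k,y}$ (it would force $k=-1$). Here I would take the one-cylinder surface built from $2g-1$ unit squares forming a single horizontal cylinder, gluing the top of the $i$-th square to the bottom of the $(i+g)$-th square modulo $2g-1$. Since $\gcd(g,2g-1)=1$ the vertical direction is again a single cylinder, an Euler characteristic count gives a single cone point of angle $2\pi(2g-1)$, and the central point reflection is an involution with $\iota^\ast\omega=-\omega$ whose quotient is a sphere, placing the surface in the hyperelliptic component of $\mathcal H(2g-2)$. Its intersection matrix is $X=(2g-1)$, so $XX^\top=\bigl((2g-1)^2\bigr)$ and $\mu^2=(2g-1)^2\ge 9>4$; the two core curves fill, $T_\alpha\circ T_\beta$ is pseudo-Anosov, and the trace field $\mathbb Q(\mu^2)=\mathbb Q$ has degree $1$. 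The main obstacle is the irreducibility of the characteristic polynomial in the range $2\le d\le g-1$, but this is precisely what Lemma~\ref{charpoly:XXt:hyp} provides; the only genuinely separate verification is the hyperellipticity of the one-cylinder surface realising $d=1$.
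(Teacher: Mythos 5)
For the range $2\le d\le g-1$, your argument is essentially identical to the paper's proof: the same surfaces $Y_{n,k,y}$ with $n=g-d\ge 1$ and $k=d-2\ge 0$, the observation that $\alpha=1+4n\neq 1$ so that Lemma~\ref{charpoly:XXt:hyp} gives irreducibility of the characteristic polynomial in $\mathbb{Z}[t,y]$, then Hilbert's irreducibility theorem to specialise $y$, and Thurston--Veech to get an affine product of multitwists with trace field $\mathbb{Q}(\mu^2)$ of degree $k+2=d$. Your remark that the $k=0$ case needs the convention $q_{-1}=1$ (or a direct check on the $2\times 2$ matrix) is a fair reading of Lemma~\ref{charpoly:XXt:hyp} and matches how the paper uses it for $d=2$.

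Your treatment of $d=1$, however, contains a genuine error. Gluing the top of the $i$-th square to the bottom of the $(i+g)$-th square modulo $2g-1$ is a cyclic rotation of the boundary of the cylinder, and any rotation gluing of a one-cylinder origami yields a flat torus. Concretely, the horizontal and vertical monodromy permutations are $h=(1\,2\,\cdots\,2g-1)$ and $v=h^{g}$; they commute, so every vertex class is regular. Indeed each vertex class consists of one top vertex and one bottom vertex with total angle $2\pi$, giving $V=2g-1$, $E=2(2g-1)$, $F=2g-1$ and $\chi=V-E+F=0$: the surface has genus one, not a single cone point of angle $2\pi(2g-1)$, so it does not lie in $\mathcal{H}(2g-2)$ at all. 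The repair is cheap and is what the paper means by ``the case $d=1$ is clear by considering square-tiled surfaces'': specialise $y$ to any positive integer in $Y_{n,k,y}$ (or take the staircase of Figure~\ref{hyp:minimal} with $y=1$), which is a square-tiled surface in the hyperelliptic component of $\mathcal{H}(2g-2)$ by Lemma~\ref{hyp:strata}. Since all cylinder moduli are then rational, suitable powers of the horizontal, respectively vertical, cylinder twists assemble into affine multitwists with derivatives $\left(\begin{smallmatrix}1&c\\0&1\end{smallmatrix}\right)$ and $\left(\begin{smallmatrix}1&0\\-c'&1\end{smallmatrix}\right)$ with $c,c'\in\mathbb{Q}$, and their product is pseudo-Anosov with derivative of trace $2-cc'\in\mathbb{Q}$, hence trace field $\mathbb{Q}$ of degree one.
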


\begin{proof}[Proof of Theorem~\ref{thm:hyp}]
Since the case $d=1$ is clear by considering square-tiled surfaces, let us assume $d\geq 2$ and set $k=d-2\geq 0$. We construct a surface $X_{n,k,y}$ or $Y_{n,k,y}$ where $n=g-d=g-k-2\geq 0$ (see Lemma~\ref{hyp:strata}). If $d<g$ then $n\neq 0$ and $\alpha \neq 1$. If $d=g$, that is, $n=0$, then by assumption we consider only $Y_{n,k,y} \in \mathcal H(g-1,g-1)$ so that $\alpha = 4n+2=2 \neq 1$. Thus Lemma~\ref{charpoly:XXt:hyp} applies and the characteristic polynomial of~$XX^\top$, viewed as a polynomial in $\mathbb Z[y,t]$ is irreducible. Then by Hilbert's irreducibility theorem, there are infinitely many specifications of $y$ so that the resulting polynomial is irreducible as a polynomial in the variable $t$. Note that all specifications can be realised geometrically. Indeed, one can choose $y>0$ by symmetry. In particular, applying Thurston--Veech's construction, there exists a product of two multitwists on the surface of genus $g = n+d$ in the desired connected component.
\end{proof}
We also prove the analogous theorem for degrees of stretch factors. 
\begin{thm}
\label{thm:hyp:stretch}
For any hyperelliptic connected component $\mathcal C$ of $\mathcal H(g-1,g-1)$,
every even number $2 \leq 2d \leq 2g$ is realised as the degree of the stretch factor of a product of two affine multitwists on a surface in $\mathcal C$.

For any hyperelliptic connected component $\mathcal C$ of $\mathcal H(2g-2)$,
every even number $2 \leq 2d \leq 2g-2$ is realised as the degree of the stretch factor of a product of two affine multitwists on a surface in $\mathcal C$.
\end{thm}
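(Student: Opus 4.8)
The plan is to run the argument of Theorem~\ref{thm:all:strata:stretch} on the very surfaces produced in the proof of Theorem~\ref{thm:hyp}, feeding them into the nonsplitting criterion of Theorem~\ref{thm:criterion}. For $d\ge 2$ I set $k=d-2$ and $n=g-d$ and take $X_{n,k,y}$ when the target is the hyperelliptic component of $\mathcal H(g-1,g-1)$ and $Y_{n,k,y}$ when it is the hyperelliptic component of $\mathcal H(2g-2)$; by Lemma~\ref{hyp:strata} these lie in the prescribed component. As in Theorem~\ref{thm:hyp}, Lemma~\ref{charpoly:XXt:hyp} together with Hilbert's irreducibility theorem furnishes infinitely many integers $y$ for which the characteristic polynomial of the $d\times d$ Jacobi matrix $XX^\top$ is irreducible of degree $d$, so that the Perron--Frobenius eigenvalue $\mu^2$ has degree $d$ and the trace field has degree $d$. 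The constraint $\alpha\ne 1$ in Lemma~\ref{charpoly:XXt:hyp} is exactly what produces the two different ranges: $X_{n,k,y}$ has $\alpha=4n+2\ge 2$ for every $n\ge 0$, so $d$ may reach $g$, whereas $Y_{n,k,y}$ has $\alpha=4n+1$, equal to $1$ precisely when $n=0$, i.e. $d=g$, forcing $d\le g-1$. The degenerate value $d=1$ is arithmetic and is realised by square-tiled surfaces, as already noted in Theorem~\ref{thm:hyp}.

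What remains is to verify, for these examples, the two-sided hypothesis $N+M>\sigma(\Omega+2I)+\mathrm{null}(\Omega+2I)>N+M-2d$ of Theorem~\ref{thm:criterion}, where $N=d$ is the number of horizontal core curves, $M$ the number of vertical ones, and $\Omega=\left(\begin{smallmatrix}0&X\\X^\top&0\end{smallmatrix}\right)$. By part~(\ref{itemone}) of Lemma~\ref{dissprops} the central quantity equals the number of eigenvalues of $\Omega$ in $[-2,2]$. Since $\det XX^\top=y^2(\alpha-1)\ne 0$, the matrix $XX^\top$ is an invertible $d\times d$ matrix, so $X$ has full row rank $d$; consequently the nonzero eigenvalues of $\Omega$ are exactly the $2d$ numbers $\pm\mu_i$ with $\mu_i^2$ ranging over the eigenvalues of $XX^\top$, while $0$ occurs with multiplicity $M-d$. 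As the characteristic polynomial is irreducible of degree $d\ge 2$, no $\mu_i^2$ equals the rational number $4$, and counting the eigenvalues of $\Omega$ inside $[-2,2]$ gives
\[
\sigma(\Omega+2I)+\mathrm{null}(\Omega+2I)=(M-d)+2\cdot\#\{i:\mu_i^2<4\}.
\]
Substituting this into the two inequalities and cancelling $M-d$, the entire hypothesis of Theorem~\ref{thm:criterion} collapses, independently of $M$, to the single requirement
\[
0<\#\{i:\mu_i^2<4\}<d.
\]

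The upper bound is immediate: the largest eigenvalue of $XX^\top$ is at least its largest diagonal entry $y^2$, which exceeds $4$ as soon as $y\ge 3$; this is also the inequality $\mu^2>4$ guaranteeing that $T_\alpha\circ T_\beta$ is pseudo-Anosov. The lower bound, namely that $XX^\top$ possesses an eigenvalue below $4$, is where the tridiagonal structure must be exploited, and I expect this to be the crux. For $d\ge 3$ it follows from a double application of Cauchy interlacing and holds for every admissible $y$: the smallest eigenvalue of $XX^\top$ is at most the smallest eigenvalue of its top-left $(d-1)\times(d-1)$ principal submatrix $C=2I_{d-1}+\mathrm{Adj}(P_{d-1})+(\alpha-2)E_{11}$, and since $\alpha\ge 2$ the positive semidefinite rank-one term shifts the spectrum by at most one interlacing step, so that the smallest eigenvalue of $C$ does not exceed the second-smallest eigenvalue $2+2\cos\!\big((d-2)\pi/d\big)<4$ of $2I_{d-1}+\mathrm{Adj}(P_{d-1})$. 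Hence for $d\ge 3$ one may simply take $y$ large among the values provided by Hilbert's theorem, and both inequalities hold simultaneously.

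The remaining case $d=2$ does not follow from interlacing, since the only relevant proper principal submatrix is the $1\times 1$ block $(\alpha)$ with $\alpha$ possibly larger than $4$; here I would argue directly on the quadratic $t^2-(\alpha+y^2)t+y^2(\alpha-1)$, choosing $y$ so that it takes a negative value at $t=4$ (forcing one root below $4$ and one above $4$) while the discriminant $(y^2-\alpha)^2+4y^2$ is not a perfect square. A short case analysis on $\alpha$ shows such a $y$ always exists: large $y$ works when $\alpha\le 4$, and a small value such as $y=1$ works otherwise. After this, Theorem~\ref{thm:criterion} yields the stretch factor of degree $4$. With $d=1$ handled by square-tiled surfaces and $d\ge 2$ by the above, every even degree in the stated range is realised, and since by Lemma~\ref{hyp:strata} all these surfaces lie in the prescribed hyperelliptic component, the theorem follows. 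The only genuinely new ingredient beyond Theorem~\ref{thm:all:strata:stretch} is the interlacing estimate for the single weighted path $XX^\top$, which replaces the forest-of-paths deletion used there.
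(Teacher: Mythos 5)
Your proposal is correct, and it verifies the hypothesis of Theorem~\ref{thm:criterion} by a genuinely different route than the paper. The paper reuses the deletion trick from Theorem~\ref{thm:all:strata:stretch}: it removes the one or two vertices of the intersection graph carrying the parameters ($y$ and the long horizontal cylinder), observes that the remaining $\Omega'$ is the adjacency matrix of a forest of paths so that $\Omega'+2I$ is positive definite, and concludes $\sigma(\Omega+2I)\ge\dim(\Omega)-2$ (for $d=2$, where it specialises to $y=1$ and checks $\mu^2=\frac{\alpha+1+\sqrt{\alpha^2-2\alpha+5}}{2}$ is quadratic by hand) or $\sigma(\Omega+2I)\ge\dim(\Omega)-4$ (for $d\ge 3$). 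You instead compute the central quantity exactly: since $\det XX^\top=y^2(\alpha-1)\neq 0$, the spectrum of $\Omega$ is $\{\pm\mu_i\}\cup\{0^{M-d}\}$, and the two-sided hypothesis collapses to $0<\#\{i:\mu_i^2<4\}<d$, which you then check by Cauchy interlacing plus a rank-one perturbation estimate on the Jacobi matrix for $d\ge3$, and by a direct analysis of the quadratic $t^2-(\alpha+y^2)t+y^2(\alpha-1)$ for $d=2$. Both routes work; yours trades the paper's soft graph-theoretic positivity argument for exact spectral counting, which is slightly more computational but also more informative, and your $d=2$ treatment is in fact more robust: in the corner case $\alpha=2$ (namely $X_{0,0,y}$, i.e.\ $g=d=2$ in $\mathcal H(1,1)$), the paper's specific choice $y=1$ gives $\mu^2=\frac{3+\sqrt{5}}{2}<4$, so the product of the two multitwists is not pseudo-Anosov and the first inequality of Theorem~\ref{thm:criterion} fails, whereas your prescription of taking $y$ large when $\alpha\le 4$ (with discriminant $(y^2-\alpha)^2+4y^2=(y^2-\alpha+2)^2+4(\alpha-1)$ trapped strictly between consecutive squares) handles it correctly; your identification of which surface serves which component ($X_{n,k,y}$ with $\alpha=4n+2$ for $\mathcal H(g-1,g-1)$ up to $d=g$, and $Y_{n,k,y}$ with $\alpha=4n+1$, forcing $n\ge1$ and $d\le g-1$, for $\mathcal H(2g-2)$) matches the paper's intent and explains the two ranges in the statement.
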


\begin{proof}
We use the same examples as in the proof of Theorem~\ref{thm:hyp}.
We first deal with the case~$d = 2$ by taking the specific example~$y=1$. 
In this case, we have~$k=0$ and we get~$XX^\top = \begin{pmatrix} \alpha & 1 \\ 1 & 1 \end{pmatrix}$. 
We obtain
$$\mu^2 = \frac{\alpha + 1 + \sqrt{\alpha^2 - 2\alpha + 5}}{2},$$
which is an algebraic number of degree two over~$\mathbb{Q}$. Indeed, we have 
$$\alpha^2 > \alpha^2 - 2\alpha + 5 > (\alpha - 1) ^2$$ 
in case~$\alpha \ne 1,2$, so this number is not a square and~$\mu^2$ is not rational. Neither is it in case~$\alpha = 2$, by direct calculation, 
and the case~$\alpha = 1$ is not needed. 

We are now ready to apply Theorem~\ref{thm:criterion}. Let~$\Omega'$ be the matrix obtained from~$\Omega$ by deleting 
the row and the column corresponding to the cylinder with~$2n + 1$ or~$2n+2$ squares. 
We have~$$\sigma(\Omega + 2I) \ge \sigma(\Omega' + 2I) - 1.$$ By construction,~$\Omega'$ is the adjacency matrix of a forest 
consisting of path graphs, so that~$2I + \Omega'$ is positive definite. 
We get~$$\sigma(\Omega + 2I) \ge \dim(\Omega) - 2 >  \dim(\Omega) - 4.$$
The criterion applies and the mapping class~$T_\alpha \circ T_\beta$ is pseudo-Anosov with stretch factor~$\lambda$ of degree~$2d = 4$.

For the case~$d \ge 3$, we take the examples as in the proof of Theorem~\ref{thm:hyp}, without specialising~$y$. 
Let~$\Omega'$ be the matrix obtained from~$\Omega$ by deleting 
the rows and the columns corresponding to the horizontal cylinder with~$2n + 1$ or~$2n+2$ squares, 
and to the vertical cylinder with~$y + 1$ squares. 
 We have~$$\sigma(\Omega + 2I) \ge \sigma(\Omega' + 2I) - 2.$$ By construction,~$\Omega'$ is the adjacency matrix of a forest 
consisting of path graphs, so that~$2I + \Omega'$ is positive definite. 
We get~$$\sigma(\Omega + 2I) \ge \dim(\Omega) - 4 >  \dim(\Omega) - 2d.$$
The criterion applies and the mapping class~$T_\alpha \circ T_\beta$ is pseudo-Anosov with stretch factor~$\lambda$ of degree~$2d$.
\end{proof}

\subsection{Reaching hyperelliptic component of $\mathcal H(2g-2)$ with degree~$g$}

Take the stair case model with a ``long'' stair made of $y^2$ squares (see Figure~\ref{hyp:minimal}).
\begin{figure}[htbp]
\begin{tikzpicture}[scale=0.8]

\draw[thick] (3,0) -- (3,1) -- (4.5,1); \draw[thick] (5.5,1) -- (7,1);
\draw[thick] (3,0) -- (3,0) -- (4.5,0); \draw[thick] (5.5,0) -- (8,0)--(8,1);

\foreach \i in {0,1}
{\draw[thick] (7+\i ,1+\i) -- (7+\i,2+\i) -- (9+\i,2+\i) -- (9+\i,1+\i) -- cycle;

\draw[thick] (8+\i ,1+\i)--(8+\i ,2+\i);}
\draw[thick] (10,4) -- (10,5) -- (12,5) -- (12,4) -- cycle;
\draw[thick] (11,4) -- (11,6) -- (12,6) -- (12,5);
\draw[thick] (4,0) -- (4,1);  \draw[thick] (6,0) -- (6,1);
\draw[thick] (7,0) -- (7,1);

\foreach \p in {(3,1),(3,0),(7,1),(9,1),(9,3),(12,4),(11,4),(12,6),(11,5),(12,5),(9,2),(7,2),(7,0),(8,0),(8,1),(8,2),(8,3),(10,2),(10,3),(10,4),(10,5),(11,6)}
{
\draw[fill=black] \p circle (2.2pt);
}
\foreach \p in {(6,0),(6,1),(4,1),(4,0)}
{
\draw[fill=red] \p circle (2.2pt);
}

\draw (7.5,2) node[above]{$\scriptstyle B_1$}; \draw (7.5,0) node[below]{$\scriptstyle B_1$};
\draw (8.5,3) node[above]{$\scriptstyle B_2$}; \draw (8.5,1) node[below]{$\scriptstyle B_2$};
\draw (9.5,2) node[below]{$\scriptstyle B_{3}$}; \draw (10.5,5) node[above]{$\scriptstyle B_{g-2}$};
\draw (11.5,4) node[below]{$\scriptstyle B_{g-1}$}; \draw (11.5,6) node[above]{$\scriptstyle B_{g-1}$};

\draw (10,3.1) node[above]{$\iddots$}; 
\draw (5,0.2) node[above]{$\cdots$}; 

\end{tikzpicture}
\caption{A stair case template in the hyperelliptic component of $\mathcal H(2g-2)$.}
\label{hyp:minimal}
\end{figure}
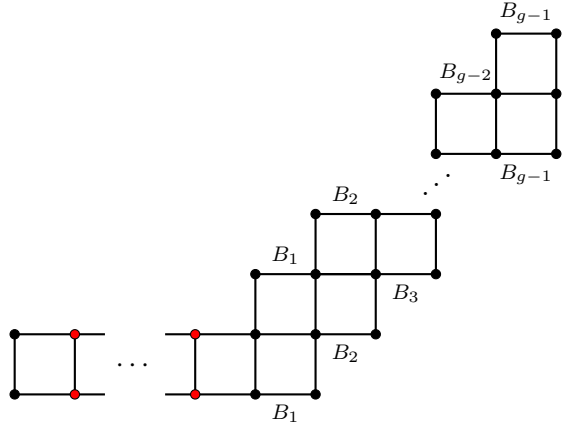
The $g\times g$ matrix is
\[
XX^\top=
\left(
\begin{array}{c|cccccc}
y^2 & 1 & \\
\hline 
1 & 2 & 1 &  \\
 & 1 & \ddots & \ddots &  \\
 &  & \ddots & \ddots & 1 \\
  &  &  & 1 & 2 & 1 \\
  & &  &  & 1 & 1   \\
\end{array}
\right).
\]
Let $p_g(t,y)$ be the characteristic polynomial of $XX^\top$. We will use the characteristic polynomial $q_g(t)$ of the $g\times g$ matrix $B_g$ obtained from $2I_{g} + \mathrm{Ad}(A_{g})$ by adding~$-1$ to the last diagonal entry, where $\mathrm{Adj}(A_g)$ is the adjacency matrix of the path graph with $g$ vertices.

By developing the determinant of $t\mathrm{Id}_{g}- XX^\top$ along the first column we get
$$
p_g(t,y)=-y^2q_{g-1}(t)+tq_{g-1}(t) - q_{g-2}(t).
$$ 
We now claim that the polynomials $q_{g-1}(t)$ and $tq_{g-1}(t) - q_{g-2}(t)$ are relatively prime. Using the same argument as in Lemma~\ref{charpoly:XXt:hyp}, we get that the matrix~$B_g$ is oscillatory, 
and hence the roots of~$p_{g-1}$ and~$p_{g-2}$ are all simple and pairwise distinct. 
Since the minimal polynomial of the Perron-Frobenius eigenvalue of~$B_g$ is a simple irreducible factor $F$ of $tq_{g-1}(t) - q_{g-2}(t)$ that is not also a factor of $q_{g-1}(t)$, Eisenstein's criterion applies and $p_g(t,y)$ is irreducible. Thus there are infinitely many specifications of $y>0$ such that $p_g(t,y) \in \mathbb Z[t]$ is irreducible. This yields the degree $d=g$ for the hyperelliptic component of $\mathcal H(2g-2)$ for any $g>1$.

Using this model, it is now straightforward to adapt the proofs of Theorem~\ref{thm:hyp} and Theorem~\ref{thm:hyp:stretch} to construct examples where the trace field is of degree~$g$ and the stretch factor is of degree~$2g$. 

\nocite{*}
\bibliography{biblio}
\bibliographystyle{amsalpha}

\end{document}